 \newtheorem{thm}{Theorem}[section]
 \newtheorem{cor}[thm]{Corollary}
 \newtheorem{lem}[thm]{Lemma}
 \newtheorem{prop}[thm]{Proposition}
 \theoremstyle{definition}
 \newtheorem{defn}[thm]{Definition}
 \theoremstyle{remark}
 \newtheorem{rema}[thm]{Remark}
 \theoremstyle{rule}
 \newtheorem{rul}[thm]{Rule}
 \theoremstyle{example}
 \newtheorem{exmp}[thm]{Example}
 \numberwithin{equation}{section}
\newcommand\blfootnote[1]{%
  \begingroup
  \renewcommand\thefootnote{}\footnote{#1}%
  \addtocounter{footnote}{-1}%
  \endgroup
}
\begin{document}

\title[]
{\centering{An Algebraic Approach to \\ Degenerate Bernoulli Numbers}}

\author[]{\centering{N. Uday Kiran and Sampath Lonka}}

\address{Department of Mathematics and Computer Science\\ Sri Sathya Sai Institute of Higher Learning\\
INDIA}
\email{nudaykiran@sssihl.edu.in\\
\hspace*{0.9cm} sampathlonka@sssihl.edu.in}
\subjclass{Primary 11B68 Secondary 11B83}
\keywords{ Degenerate Bernoulli Number, Sum of Product formula, Galois Fields, Palindrome Polynomials of a Darga, Circulant Matrices}

\blfootnote{Dedicated to Bhagawan Sri Sathya Sai Baba} 

\begin{abstract}
In this work we study the properties of a new algebraic variant of the degenerate Bernoulli polynomial $\tilde{\beta}_{k}(m,x)$ and study the corresponding degenerate Bernoulli number $\tilde{\beta}_{k}(m,1)=m^{k}\beta_{k}(1/m)$, where $\beta_{k}(\lambda), \lambda\neq 0$ is the standard degenerate Bernoulli number. Our approach relies on a new algebraic framework for generating functions and the action of a symbolic evaluation function on powers of polynomials. We show that $\tilde{\beta}_{k}(m,x)$ displays surprising links with other mathematical objects (such as Circulant matrices and Galois fields) and enjoys many interesting algebraic, symmetric and dynamical properties that could be deployed to perform efficient computations.  
\end{abstract}

\maketitle
\section{Introduction}

Degenerate Bernoulli numbers $\beta_{k}(\lambda)$, where $\lambda\neq 0$, were proposed by Carlitz \cite{Carlitz}, and are of contemporary interest (see \cite{kim,howard}). He defined them by means of the generating function 
\begin{equation}\label{carlitz}
\frac{t}{(1+\lambda t)^{1/\lambda}-1}=\sum_{k=0}^{\infty}\beta_{k}(\lambda)\frac{t^{k}}{k!}.
\end{equation}
These numbers are related to the Bernoulli numbers by $\displaystyle\lim_{\lambda \rightarrow 0}\beta_{k}(\lambda)=B_{k}$.

In this paper, our interest is to study the properties of $\tilde{\beta}_{k}(m)=m^{k}\beta_{k}(1/m)$ for $m$ a positive integer. As observed in \cite{uk}, by setting $x=(1+\lambda t)$ and $\lambda=1/m$ in (\ref{carlitz}) we have the simpler generating function 
$$
m\frac{1-x}{1-x^{m}}=\frac{m}{\Psi_{m}(x)}=\sum_{k=0}^{\infty}\frac{(-1)^{k}}{k!}\tilde{\beta}_{k}(m)(1-x)^{k},
$$
where the polynomial $\Psi_{m}(x)=\displaystyle\sum_{j=0}^{m-1}x^{j}$. This generating function can be studied through a special $m-2$ degree polynomial which emerges as a remainder of powers of a polynomial
\begin{equation}\label{fkm}
f^{(m)}_{k}(x)=\frac{(-1)^{k}}{k!}\tilde{\beta}_{k}(m,x) = \left(\frac{1}{m}x\Psi_{m}'(x)\right)^{k} \textnormal{ rem }\Psi_{m}(x),
\end{equation}
where rem stands for the polynomial remainder operator. It should be noted that the $k^{th}$ polynomial is obtained by taking remainder of the $k^{th}$ power of a polynomial through another polynomial. Thus, these polynomials are obtained through polynomial manipulation and are different from other the degenerate Bernoulli polynomials defined in the literature (see \cite{Carlitz,kim,young,howard} and the references therein).

Upon substitution of $x=1$ in (\ref{fkm}) we have 
$$
f^{(m)}_{k}(1)=\frac{(-1)^{k}}{k!}\tilde{\beta}_{k}(m,1)=\frac{(-1)^{k}}{k!}\tilde{\beta}_{k}(m).
$$
Gessel \cite{Gessel} showed that the number $\tilde{\beta}_{k}(m)$ can be expressed through a trigonometric sum and obtained an explicit formula.  

\begin{thm}[Gessel \cite{Gessel}]\label{fjm_polynomial}
\begin{equation}\label{fjm}
\tilde{\beta}_{k}(m) = \left\{\begin{array}{@{}l@{\thinspace}l}
        \displaystyle  \frac{1-m}{2} & \textnormal{   if   } k=1 \\
     \displaystyle  k\sum^{k}_{j=2}(-1)^{k-j}\frac{B_{j}}{j}\textnormal{stirl}(k-1,j-1)(m^{j}-1)  & \textnormal{   if   } k\geq 2. \\
     \end{array}\right.
\end{equation}
\end{thm}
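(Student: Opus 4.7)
The plan is to derive Gessel's formula by manipulating Carlitz's generating function and expanding it via Bernoulli and Stirling numbers. First, I would substitute $\lambda = 1/m$ in (\ref{carlitz}) and rescale $t \mapsto mt$ to obtain
$$G(t) := \sum_{k \geq 0} \tilde{\beta}_k(m)\,\frac{t^k}{k!} = \frac{mt}{(1+t)^m - 1}.$$
The key move is to introduce $u = \log(1+t)$, so that $(1+t)^m = e^{mu}$ and $G$ factors as
$$G(t) = \frac{t}{u}\cdot\frac{mu}{e^{mu}-1} = \left(\sum_{i \geq 0}\frac{u^i}{(i+1)!}\right)\!\left(\sum_{j \geq 0} B_j\,\frac{m^j u^j}{j!}\right),$$
using $t = e^u - 1$ for the first factor and the classical Bernoulli series for the second. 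Converting powers of $u$ back to $t$ via the signed Stirling identity $(\log(1+t))^n/n! = \sum_{k \geq n} s(k,n)\,t^k/k!$ and reading off the coefficient of $t^k/k!$, I arrive at an expression of the shape $\tilde{\beta}_k(m) = \sum_{j=0}^k (B_j m^j/j!)\,a_{k,j}$, where each $a_{k,j}$ is an explicit finite combination of values $s(k,n)$.

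The hard step will be evaluating $a_{k,j}$ in closed form; I expect $a_{k,j} = k\,(j-1)!\,s(k-1,j-1)$ to hold for $j \geq 1$. My strategy is to rewrite the factorial ratio as a falling factorial $n!/(n-j+1)! = (n)_{j-1}$ and extend the range of $n$ down to $0$ (the added terms with $n \leq j-2$ contribute nothing, while $n = j-1$ adds the extra amount $(j-1)!\,s(k,j-1)$). This allows me to recognise
$$\sum_{n=0}^k s(k,n)\,(n)_{j-1} \;=\; \left.\frac{d^{j-1}}{dy^{j-1}}(y)_k\right|_{y=1},$$
where $(y)_k = y(y-1)\cdots(y-k+1)$. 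Writing $(y)_k = y\cdot(y-1)_{k-1}$ and applying Leibniz (all derivatives of order $\geq 2$ of the linear factor $y$ vanish), only the Taylor coefficients of $(y-1)_{k-1}$ at $y = 1$ of orders $j-1$ and $j-2$ survive, yielding $(j-1)!\,[s(k-1,j-1) + s(k-1,j-2)]$. Subtracting the extraneous $n = j-1$ contribution and invoking the Stirling recurrence $s(k,j-1) = s(k-1,j-2) - (k-1)\,s(k-1,j-1)$ then collapses the expression to the claimed value.

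With the identity in place, the result reads $\tilde{\beta}_k(m) = a_{k,0} + k\sum_{j=1}^k (B_j/j)\,s(k-1,j-1)\,m^j$. For $k \geq 2$ the $j=1$ term vanishes since $s(k-1,0) = 0$, while all odd $j \geq 3$ drop out because $B_j = 0$. To dispose of the constant $a_{k,0}$, I would invoke the trivial observation that at $m = 1$ the generating function collapses to $G(t) = t/t = 1$, whence $\tilde{\beta}_k(1) = 0$ for every $k \geq 1$; evaluating the previous display at $m = 1$ then forces $a_{k,0} = -k\sum_{j=2}^k (B_j/j)\,s(k-1,j-1)$, which supplies the factor $(m^j - 1)$. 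Converting signed to unsigned Stirling numbers via $s(k-1,j-1) = (-1)^{k-j}\,\textnormal{stirl}(k-1,j-1)$ then yields (\ref{fjm}) for $k \geq 2$; the case $k = 1$ is handled separately by simply reading off the coefficient $(1-m)/2$ of $t$ in $G(t)$.
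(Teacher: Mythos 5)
Your argument is correct, and it is worth noting at the outset that the paper itself supplies no proof of this statement: Theorem \ref{fjm_polynomial} is quoted from Gessel, whose route (as the paper describes it) passes through a trigonometric-sum representation of $\tilde{\beta}_{k}(m)$. Your derivation is therefore a genuinely different, self-contained path. I checked the key steps: the rescaling $t\mapsto mt$ correctly turns $\beta_k(1/m)$ into $\tilde\beta_k(m)=m^k\beta_k(1/m)$ and gives $G(t)=mt/((1+t)^m-1)$; the factorization through $u=\log(1+t)$ produces $a_{k,j}=\sum_{n=j}^{k}s(k,n)\,n!/(n-j+1)!$; extending the sum to $n\ge 0$ costs exactly the single term $(j-1)!\,s(k,j-1)$; the derivative identity $\sum_{n}s(k,n)(n)_{j-1}=(j-1)!\,[s(k-1,j-1)+s(k-1,j-2)]$ follows from $(y)_k=y\,(y-1)_{k-1}$ and Leibniz as you say; and the recurrence $s(k,j-1)=s(k-1,j-2)-(k-1)s(k-1,j-1)$ collapses everything to $a_{k,j}=k\,(j-1)!\,s(k-1,j-1)$. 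The evaluation at $m=1$ (where $G\equiv 1$) to pin down $a_{k,0}$ and produce the factor $m^j-1$ is a clean touch, and the sign conversion $s(k-1,j-1)=(-1)^{k-j}\textnormal{stirl}(k-1,j-1)$ matches (\ref{fjm}) exactly; I also verified the result against a direct expansion for $k=2$, obtaining $(m^2-1)/6$. What your approach buys is a purely formal power-series proof using only the classical Bernoulli and Stirling generating functions, whereas the trigonometric-sum route yields additional information (connections to Dedekind-type sums) that your method does not expose.
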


In \cite{uk}, the first author showed that $\tilde{\beta}_{k}(m)$ is a polynomial in $m$ only through algebraic considerations. Further, it was shown in \cite{uk}, that there is a sum of product based formula for the polynomial part of the Sylvester Denumerants (aka Restricted Partitions) given by 
\begin{eqnarray}
\label{W1}W_{1}(t;\textbf{A}) &=& \frac{1}{a_{1}\cdots a_{r}}\sum^{r-1}_{j=0}\sum_{j_{1}+\cdots+j_{r}=j}\begin{pmatrix}t+r-j-1 \\ t\end{pmatrix}f_{j_{1}}^{(a_{1})}(1)\cdots f_{j_{r}}^{(a_{r})}(1)\\
\nonumber &=& \frac{1}{a_{1}\cdots a_{r}}\sum^{r-1}_{j=0}\sum_{j_{1}+\cdots+j_{r}=j}(-1)^{j}\begin{pmatrix}t+r-j-1 \\ t\end{pmatrix}\frac{\tilde{\beta}_{j_{1}}(a_{1})\cdots \tilde{\beta}_{j_{r}}(a_{r})}{j_{1}!\cdots j_{r}!},
\end{eqnarray}
where $\textbf{A}=(a_{1},\cdots,a_{r})$ is a tuple of positive integers. Theses numbers have also been found useful in identities of falling factorials (for instance, see \cite{young,howard,zhang} and the references therein).

These above observations motivate us to study the properties of the degenerate Bernoulli numbers with a view to efficient computation of  $W_{1}(t;\textbf{A})$. In this direction, we study the polynomial (\ref{fkm}) from algebraic, symmetric and dynamical viewpoints. Our contributions in this paper are the following results on $\tilde{\beta}_{k}(m,x)/k!$:
\begin{enumerate}
    \item Periodicity in a modulo $p$ (a prime) and $(m,p)=1$. As a consequence of this property and the reciprocity law we show that several $\tilde{\beta}_{k}(m)$ vanish mod $p$.
    \item For $m$ odd number, $\tilde{\beta}_{md+2}(m,x)$ is a palindrome of darga $(m-2)$ if $d$ is even and anti-palindrome of darga $(m-2)$ if $d$ is odd. Using this fact, we can provide an algebraic proof for the well know fact $\tilde{\beta}_{md+2}(m)=0$ for $d$ odd given in \cite{young}. 
    \item A sum of products formula holds for relatively coprime $m_{1}$ and $m_{2}$: 
    $$
    \sum_{j_{1}+j_{2}=k}\frac{\tilde{\beta}_{j_{1}}(m_{1})}{j_{1}!}\frac{\tilde{\beta}_{j_{2}}(m_{2})}{j_{2}!}=\sum^{m_{1}+m_{2}}_{j=0}a_{j}\frac{\tilde{\beta}_{j}(m_{1})}{j!}+b_{j}\frac{\tilde{\beta}_{j}(m_{2})}{j!}.
    $$
    For the case $m_{1}=m_{2}$ the sum of products result is given in \cite{zhang}.
    \item The set of all polynomials $\tilde{\beta}_{k}(m)$ for $k\in \mathbb{Z}$ form an infinite abelian group under multiplication mod $\Psi_{m}(x)$. In $\mathbb{F}_{p}$, this group becomes finite and is isomorphic to a subgroup of $\mathbb{F}_{p^{m-1}}^{*}$. Moreover, under certain mild conditions, the subgroup is equidistributed in $\mathbb{F}_{p^{m-1}}^{*}$.
    \item The matrix representation of the $m-2$ degree polynomials $\tilde{\beta}_{k}(m,x)/k!$ for varying $k$ are related to powers of a certain Circulant matrix \cite{davis}, which in turn has a special feature of being diagonalizable by DFT matrix. We call these matrices the Degenerate Bernoulli Matrices.
\end{enumerate}

Our approach through polynomials can be considered as an alternative approach to the matrix approach to studying sequences satisfying linear recurrence relations. This is presented in Section \ref{sec_matrix}.

\section{An Algebraic Framework for Sequences Generated by Rational Polynomials}\label{sec_framework}

Given two non-zero polynomials $p(x),q(x)\in \mathbb{Q}[x]$ we say $p(x)/q(x)$ is a rational polynomial. Further, if $\textnormal{deg}(p)<\textnormal{deg}(q)$ then we say that it is a proper rational polynomial. In order to provide an algebraic framework for sequences generated by a rational polynomial we define an operator from commutative algebra.

\begin{defn}\label{eval}
Given non-constant polynomials $r(x), s(x), a(x)\in \mathbb{Q}[x]$ and $s(x),a(x)$ are relatively prime, we define the evaluation of the fraction $\frac{r(x)}{s(x)}$ with respect to $a(x)$ 
$$
\textnormal{eval}\left(\frac{r(x)}{s(x)}; a(x)\right)
= \alpha(x)r(x) \textnormal{ rem }a(x),
$$ 
where $\alpha(x) s(x)\equiv 1 \textnormal{ mod }a(x)$ and $\textnormal{rem}$  is the polynomial remainder operator.
\end{defn}

In simple terms, the eval of $r(x)/s(x)$ takes the remainder of $r(x)$ multiplied by the inverse of $s(x)$ modulo $a(x)$. The eval operator is based on the principles of substitution and localization. The following properties can be easily proved.
\begin{lem} 
Given $a(x), r_{i}(x), s_{i}(x)\in \mathbb{Q}[x]$ and $\textnormal{gcd}(s_{i}(x),a(x))=1$ for $i=0,1$. The following are some properties of $\textnormal{eval}$ function:

\begin{enumerate}
\item $\textnormal{eval}\left(r_{0};a(x)\right)=r_{0}(x)\textnormal{ rem }a(x).$
\item $\textnormal{eval}\left(\alpha_{0} r_{0}(x)+\alpha_{1} r_{1}(x); a(x)\right)=\alpha_{0}\textnormal{eval}\left( r_{0}(x);a(x)\right)+\alpha_{1}\textnormal{eval}\left(r_{1}(x);a(x)\right)$.
\item $\textnormal{eval}\left(\frac{r_{0}(x)r_{1}(x)}{s_{0}(x)s_{1}(x)};a(x)\right)=\textnormal{eval}\left(\frac{r_{0}(x)}{s_{0}(x)};a(x)\right)\textnormal{eval}\left(\frac{r_{1}(x)}{s_{1}(x)}; a(x)\right).$
\item $\textnormal{eval}\left(\frac{r_{0}(x) - p_{0}(x)a(x)}{s_{0}(x)- q_{0}(x) a(x)};a(x)\right)=\textnormal{eval}\left(\frac{r_{0}(x)}{s_{0}(x)};a(x)\right)$ \textnormal{for some} $p_{0}(x),q_{0}(x)\in \mathbb{Q}[x].$
\end{enumerate}
\end{lem}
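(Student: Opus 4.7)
The unifying idea is to view the $\textnormal{eval}$ operator as the canonical map that sends a fraction $r(x)/s(x)$ (with $\gcd(s,a)=1$) to its image in the quotient ring $R := \mathbb{Q}[x]/(a(x))$, represented canonically by a polynomial of degree less than $\deg a$. Since $\gcd(s,a) = 1$, the element $s$ is a unit in $R$, so its inverse $\alpha$ (characterized by $\alpha s \equiv 1 \pmod{a}$) exists and is unique modulo $a(x)$. With this viewpoint the four assertions reduce to three well-known facts: the remainder operator is $\mathbb{Q}$-linear, the quotient map $\mathbb{Q}[x]\to R$ is a ring homomorphism, and two fractions that are equal in $R$ yield the same canonical representative.

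Items (1) and (2) will be handled first and are essentially bookkeeping. For (1), the denominator is implicitly $1$, so $\alpha = 1$ is a valid choice in Definition \ref{eval} and the expression collapses to $r_0\,\textnormal{rem}\,a$. Item (2) is then immediate from the $\mathbb{Q}$-linearity of $\textnormal{rem}$, which in turn follows from the uniqueness of the division algorithm in $\mathbb{Q}[x]$. For item (3), I will exhibit the inverse of $s_0(x) s_1(x)$ mod $a(x)$ as the product $\alpha_0 \alpha_1$ of the individual inverses; multiplying the congruences $\alpha_i s_i \equiv 1$ immediately gives $(\alpha_0\alpha_1)(s_0 s_1) \equiv 1\pmod{a}$. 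Substituting into Definition \ref{eval} reduces (3) to the identity
$$
(uv)\,\textnormal{rem}\,a = \bigl((u\,\textnormal{rem}\,a)(v\,\textnormal{rem}\,a)\bigr)\,\textnormal{rem}\,a,
$$
which is precisely the statement that the canonical projection $\mathbb{Q}[x]\to R$ preserves products. Item (4) then follows by first observing that $s_0 - q_0 a \equiv s_0 \pmod{a}$, so $\gcd(s_0 - q_0 a,a) = 1$ and the left-hand side is well defined, and second that any inverse of $s_0$ modulo $a$ also serves as an inverse of $s_0 - q_0 a$; applying such an inverse to $r_0 - p_0 a \equiv r_0 \pmod{a}$ and taking rem yields the right-hand side.

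The only step requiring any care is (3): one must verify both that the product of the inverses is genuinely an inverse of the product denominator, and that reducing inside a product of representatives agrees with reducing outside. Both checks are routine once one passes to the quotient-ring picture, so I do not expect any real obstacle; the lemma essentially repackages the ring structure of $\mathbb{Q}[x]/(a(x))$ into statements about the $\textnormal{eval}$ symbol, and each of the four items is a direct translation.
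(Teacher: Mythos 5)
Your proposal is correct. The paper itself offers no proof of this lemma (it simply remarks that the properties ``can be easily proved'' and defers to the reference \cite{uk}), so there is no argument to compare against; your quotient-ring reading of $\textnormal{eval}$ as the canonical reduction map into $\mathbb{Q}[x]/(a(x))$ is the natural one and each of the four items does follow as you describe. Two small points worth making explicit if you write this out in full: first, the value of $\textnormal{eval}$ is independent of the choice of $\alpha$ because any two inverses of $s$ modulo $a$ are congruent modulo $a$ (you note uniqueness in $R$, which is exactly what is needed); second, item (3) as printed equates a reduced remainder with a literal product of two remainders, which can have degree up to $2(\deg a - 1)$, so the identity must be read modulo $a(x)$ (equivalently, with a final $\textnormal{rem}\,a$ applied to the right-hand side) --- your identity $(uv)\,\textnormal{rem}\,a = \bigl((u\,\textnormal{rem}\,a)(v\,\textnormal{rem}\,a)\bigr)\,\textnormal{rem}\,a$ is the correct formulation, and it is the one the paper implicitly uses later when multiplying generating polynomials modulo $\Psi_m(x)$.
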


For a rigorous treatment of this operator and its properties we refer to \cite{uk}. The operator eval was shown to be an effective tool to handle partial fractions. The following result was indeed proved. 

\begin{thm}[Extended Cover-Up Method, \cite{uk}]\label{cover_up}
Let $p_{1}(x),\cdots, p_{n}(x)\in \mathbb{Q}[x]$ be pairwise relatively prime polynomials and let $f(x)\in \mathbb{Q}[x]$ satisfying $deg(f)< deg(p_{1}\cdots p_{k})$. Then, we have the following identity 
$$
f(x)\prod_{j=1}^{k}\frac{1}{p_{j}(x)}=\sum^{k}_{j=1}
\frac{f(x)\textnormal{eval}\left(\displaystyle\prod_{\substack{i=1\\ i\neq j}}^{n}\frac{1}{p_{i}(x)};p_{j}(x)\right)}{p_{j}(x)}.$$
\end{thm}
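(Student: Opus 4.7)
The plan is to reduce the identity to a single polynomial identity, namely that the sum
$$R(x) := \sum_{j=1}^{k}\beta_{j}(x)\prod_{\substack{i=1\\ i\neq j}}^{k}p_{i}(x)\quad\text{equals}\quad 1,$$
where $\beta_{j}(x):=\textnormal{eval}\!\left(\prod_{i\neq j}\tfrac{1}{p_{i}(x)};\,p_{j}(x)\right)$. (I am reading the $n$ appearing inside the eval as a typo for $k$; the general $n>k$ case reduces to this one by property (3), absorbing the extra $\textnormal{eval}(1/p_{i};p_{j})$ factors into the $\beta_{j}$.) By Definition \ref{eval} together with property (3), each $\beta_{j}$ is characterised by
$$\beta_{j}(x)\!\!\prod_{i\neq j}p_{i}(x)\equiv 1\pmod{p_{j}(x)},\qquad \deg\beta_{j}<\deg p_{j}.$$
Once the identity $R(x)=1$ is in hand, dividing by $\prod_{l}p_{l}(x)$ and multiplying by $f(x)$ produces exactly the claimed decomposition.

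To establish $R(x)=1$ I would argue first congruentially and then by a degree count. For each fixed $l\in\{1,\dots,k\}$, every term with $j\neq l$ contains $p_{l}$ in the product $\prod_{i\neq j}p_{i}$ and hence vanishes modulo $p_{l}$; the surviving $j=l$ term reduces to $1$ by the defining congruence of $\beta_{l}$. Thus $R\equiv 1\pmod{p_{l}}$ for every $l$, and since the $p_{l}$ are pairwise coprime the Chinese Remainder Theorem upgrades this to
$$R(x)\equiv 1\pmod{\textstyle\prod_{l=1}^{k}p_{l}(x)}.$$

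Now comes the degree estimate that closes the argument: because $\deg\beta_{j}<\deg p_{j}$, one has
$$\deg\!\left(\beta_{j}\prod_{i\neq j}p_{i}\right)<\deg p_{j}+\sum_{i\neq j}\deg p_{i}=\deg\!\prod_{l=1}^{k}p_{l},$$
so $\deg R<\deg\prod_{l}p_{l}$. A polynomial of degree strictly less than $\deg\prod_{l}p_{l}$ that is congruent to $1$ modulo $\prod_{l}p_{l}$ must equal $1$ on the nose, giving $R(x)=1$.

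The hypothesis $\deg(f)<\deg(p_{1}\cdots p_{k})$ plays no role in establishing the rational-function identity itself; its purpose is to ensure that after reducing each $f\beta_{j}$ modulo $p_{j}$ the right-hand side yields the canonical partial-fraction expansion with numerators of degree less than $\deg p_{j}$. The main obstacle I anticipate is precisely the degree-count step: it is what promotes the CRT congruence to an exact equality, and everything else is routine manipulation through the algebraic properties of eval recorded in the preceding lemma.
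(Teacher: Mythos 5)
The paper states this theorem without proof, deferring to \cite{uk}, so there is no in-paper argument to compare against; I can only assess your proposal on its own terms, and it is correct. The reduction to the polynomial identity $R(x)=\sum_{j}\beta_{j}\prod_{i\neq j}p_{i}=1$ is the right move: each $\beta_{j}=\textnormal{eval}\left(\prod_{i\neq j}1/p_{i};p_{j}\right)$ is by Definition \ref{eval} a remainder modulo $p_{j}$, so $\deg\beta_{j}<\deg p_{j}$ and $\beta_{j}\prod_{i\neq j}p_{i}\equiv 1\pmod{p_{j}}$; your congruence check $R\equiv 1\pmod{p_{l}}$ for every $l$, the CRT upgrade to $R\equiv 1\pmod{\prod_{l}p_{l}}$, and the degree bound $\deg R<\deg\prod_{l}p_{l}$ together force $R=1$, after which dividing by $\prod_{l}p_{l}$ and multiplying by $f$ yields exactly the stated decomposition. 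Your reading of the stray index $n$ as a typo for $k$ is the sensible one, and your observation that the hypothesis $\deg(f)<\deg(p_{1}\cdots p_{k})$ is not needed for the identity as literally written (only to make the right-hand side a canonical partial-fraction form after reducing each numerator modulo $p_{j}$) is accurate.
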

As an immediate corollary we prove the following useful partial fraction decomposition result.

\begin{lem}\label{k_1_case_lem}
Given $p(x),q(x)\in \mathbb{Q}[x]$ with $\textnormal{deg}(p)<\textnormal{deg}(q)$ and $\textnormal{gcd}(p(x),q(x))=1$. Suppose $p(x)$ and $q(x)$ are non-vanishing at $x=a$ we have the partial fraction identity
\begin{equation}\label{k_1_case}
    \frac{p(x)}{(x-a)q(x)}=\frac{p(a)}{q(a)(x-a)}+\frac{\textnormal{eval}\left(p(x)/(x-a);q(x)\right)}{q(x)},
\end{equation}
where 
\begin{equation}\label{k_1_case_eval}
\textnormal{eval}\left(\frac{p(x)}{x-a};q(x)\right)=\frac{q(a)p(x)-p(a)q(x)}{q(a)(x-a)}.
\end{equation}
\end{lem}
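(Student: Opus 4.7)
The plan is to first pin down an explicit formula for the inverse of $(x-a)$ modulo $q(x)$ and then do a direct computation; Theorem \ref{cover_up} could be invoked, but because its statement leaves $f(x)$ outside the eval, some cleanup is needed either way, so it is just as clean to work from the definition of eval. The main conceptual tool is the Taylor-type decomposition
$$q(x) = q(a) + (x-a)\tilde{q}(x), \qquad p(x) = p(a) + (x-a)\tilde{p}(x),$$
where $\tilde{q}(x),\tilde{p}(x)\in \mathbb{Q}[x]$ are defined by the equations themselves; this is legitimate because $(x-a)$ divides $p(x)-p(a)$ and $q(x)-q(a)$ as polynomials.

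First I would compute the inverse. Reducing $q(x) = q(a)+(x-a)\tilde{q}(x)$ modulo $q(x)$ gives $(x-a)\tilde{q}(x) \equiv -q(a) \pmod{q(x)}$, and since $q(a)\neq 0$ this yields $\alpha(x) := -\tilde{q}(x)/q(a)$ as the inverse of $(x-a)$ modulo $q(x)$. Substituting $p(x) = p(a)+(x-a)\tilde{p}(x)$ into $\alpha(x)p(x)$ and using $(x-a)\tilde{q}(x) = q(x)-q(a)$ again, the term carrying the factor $q(x)$ drops out modulo $q(x)$, leaving
$$\alpha(x)p(x) \equiv \tilde{p}(x) - \frac{p(a)\,\tilde{q}(x)}{q(a)} \pmod{q(x)}.$$
Multiplying numerator and denominator by $(x-a)$ rewrites this residue as $(q(a)p(x)-p(a)q(x))/(q(a)(x-a))$, which is exactly the right-hand side of (\ref{k_1_case_eval}). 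I still have to check that this is the actual remainder (not merely a representative): since $\deg p < \deg q$, the top-degree term of $q(a)p(x)-p(a)q(x)$ is $-p(a)\cdot \text{lc}(q)\,x^{\deg q}$ (nonzero because $p(a)\neq 0$), so dividing by $(x-a)$ produces a polynomial of degree exactly $\deg q - 1$, which is already reduced mod $q(x)$.

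With (\ref{k_1_case_eval}) in hand, (\ref{k_1_case}) is almost bookkeeping: I would place the RHS of (\ref{k_1_case}) over the common denominator $q(a)(x-a)q(x)$, plug in the formula for the eval just obtained, and observe that the numerators collapse to $q(a)p(x)$, matching the LHS. The only real obstacle anywhere is the degree bound at the end of the second step, which must be argued explicitly so that the candidate expression is recognised as the genuine output of the rem operator; aside from that, every manipulation is a direct consequence of the decompositions above and the multiplicative/linear properties of eval already listed.
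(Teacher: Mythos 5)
Your proof is correct, but it inverts the logical order of the paper's argument and is genuinely self-contained. The paper first invokes the Extended Cover-Up Method (Theorem \ref{cover_up}) to obtain the two-term decomposition with both summands expressed as evals, then computes $\textnormal{eval}(p(x)/q(x);x-a)=p(a)/q(a)$ via Bezout and the Remainder Theorem, and finally \emph{solves} identity (\ref{k_1_case}) for the remaining eval to deduce formula (\ref{k_1_case_eval}). You instead compute $\textnormal{eval}(p(x)/(x-a);q(x))$ directly from Definition \ref{eval}: the decompositions $q(x)=q(a)+(x-a)\tilde{q}(x)$ and $p(x)=p(a)+(x-a)\tilde{p}(x)$ give the explicit inverse $-\tilde{q}(x)/q(a)$ of $(x-a)$ modulo $q(x)$, the reduction yields $\tilde{p}(x)-p(a)\tilde{q}(x)/q(a)=(q(a)p(x)-p(a)q(x))/(q(a)(x-a))$, and the degree bound certifies this as the genuine remainder; identity (\ref{k_1_case}) then follows by clearing denominators. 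Your route buys independence from Theorem \ref{cover_up} and makes explicit the otherwise implicit verification that the closed form really is the output of the rem operator (a point the paper's ``upon rearranging terms'' glosses over); the paper's route is shorter given that the cover-up machinery is already in place and will be reused. Both are valid; your degree argument correctly uses the hypothesis $p(a)\neq 0$, though for the remainder property alone the weaker bound $\deg\bigl(q(a)p(x)-p(a)q(x)\bigr)\leq\deg q$ would suffice.
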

\begin{proof}
By the Extended Cover-Up method we have 
$$
\frac{p(x)}{(x-a)q(x)}=\frac{\textnormal{eval}\left(p(x)/q(x);x-a\right)}{(x-a)}+\frac{\textnormal{eval}\left(p(x)/(x-a);q(x)\right)}{q(x)}.
$$

Since $q(a)\neq 0$, we have $q(x)$ and $(x-a)$ to be coprimes. By Bezout's identity, there exist $\alpha(x),\beta(x)$ such that $\alpha(x)q(x)+\beta(x)(x-a)=1$. Without loss of generality we may assume that $\alpha(x)$ is a remainder when divided by $(x-a)$, i.e., $\alpha(x)$ is a constant. Otherwise, we can write $\alpha(x)=\tilde{\alpha}(x)(x-a)+r(x)$ where $\textnormal{deg}(r)=0$ and express the above equation as $r(x)q(x)+(\beta(x)+\tilde{\alpha}(x))(x-a)=1$. 

Substituting $x=a$ we obtain $\alpha(x)=1/q(a)$. Further, by the definition of eval and the Remainder Theorem we have $\textnormal{eval}(p(x)/q(x);(x-a))=p(a)/q(a)$. Upon rearranging terms in (\ref{k_1_case}) we obtain (\ref{k_1_case_eval}).
\end{proof}

The above lemma is a general partial fraction formula that can be inductively extended to higher powers case.  

\begin{lem}\label{lem_pf}
Given two polynomials $p(x),q(x)$ with $\textnormal{deg}(p)<\textnormal{deg}(q)$ and both $p(a)$ and $q(a)$ are non-zero. For $k\geq 1$ the following partial fraction holds
\begin{equation}
\frac{p(x)}{(x-a)^{k}q(x)} =\frac{1}{q(a)}\sum_{j=0}^{k-1}\frac{g_{j}(a)}{(x-a)^{k-j}}+\frac{g_{k}(x)}{q(x)},
\end{equation}
where $g_{k}(x)=\textnormal{eval}(p(x)/(x-a)^{k};q(x))$ and $g_{0}(x)=p(a)$. 
\end{lem}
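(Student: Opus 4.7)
The plan is to proceed by induction on $k \geq 1$. The base case $k=1$ is precisely Lemma \ref{k_1_case_lem}: since $g_0(x)=p(a)$ by convention, the sum $\sum_{j=0}^{0} g_j(a)/(x-a)^{1-j}$ collapses to $p(a)/(x-a)$, so the claimed identity reduces to $p(x)/((x-a)q(x)) = p(a)/(q(a)(x-a)) + g_1(x)/q(x)$ with $g_1(x)=\textnormal{eval}(p(x)/(x-a);q(x))$, which is exactly formula (\ref{k_1_case}).

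For the inductive step, I would assume the identity at level $k$ and divide both sides by $(x-a)$. Each existing pole $g_j(a)/(x-a)^{k-j}$ becomes $g_j(a)/(x-a)^{k+1-j}$, while the residual $g_k(x)/q(x)$ turns into the new fraction $g_k(x)/((x-a)q(x))$. To the latter I apply Lemma \ref{k_1_case_lem} (the only essential hypothesis is $q(a)\neq 0$, which holds throughout), peeling off an additional simple-pole term $g_k(a)/(q(a)(x-a))$---this supplies the missing index $j=k$ in the extended sum---and leaving a residual of the form $\textnormal{eval}(g_k(x)/(x-a);q(x))/q(x)$.

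What remains is to verify $\textnormal{eval}(g_k(x)/(x-a);q(x)) = g_{k+1}(x)$, i.e., that this natural recursive description matches the direct definition $g_{k+1}(x)=\textnormal{eval}(p(x)/(x-a)^{k+1};q(x))$. This is where property (3) of the eval operator (multiplicativity) is used: taking $r_0/s_0 = p/(x-a)^k$ and $r_1/s_1 = 1/(x-a)$ gives $\textnormal{eval}(p(x)/(x-a)^{k+1};q(x)) = g_k(x)\cdot\textnormal{eval}(1/(x-a);q(x))\bmod q(x)$, and since $g_k$ has degree less than $\deg q$, the right-hand side coincides with $\textnormal{eval}(g_k(x)/(x-a);q(x))$, closing the induction.

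I expect the main obstacle to be precisely this bookkeeping at the end: ensuring that the iterated one-step peeling (inverting one factor of $(x-a)$ at a time modulo $q$) is consistent with the all-at-once eval of $p(x)/(x-a)^{k+1}$. Once the multiplicativity of eval is invoked, this reduces to a clean polynomial identity, but the invocation requires checking that every gcd condition needed for eval to be well-defined---namely $\gcd((x-a)^j,q(x))=1$ for each relevant $j$---holds, which is immediate from $q(a)\neq 0$.
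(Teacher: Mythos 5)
Your proof is correct and follows essentially the same route as the paper: induction on $k$ with Lemma \ref{k_1_case_lem} as the base case, dividing by $(x-a)$ and peeling off one simple pole per step. The only difference is that you spell out, via the multiplicativity of eval, the identification $\textnormal{eval}(g_k(x)/(x-a);q(x))=g_{k+1}(x)$, which the paper dismisses as "easy to see."
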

\begin{proof}
We prove the result by induction on $k$. The case $k=1$ is given in Lemma \ref{k_1_case_lem}. Suppose the result holds for $k=n$, i.e.  
$$
\frac{p(x)}{(x-a)^{n}q(x)} =\frac{1}{q(a)}\sum_{j=0}^{n-1}\frac{g_{j}(a)}{(x-a)^{n-j}}+\frac{g_{k}(x)}{q(x)}.
$$
Dividing both sides of the above equation by $(x-a)$ we have 
$$
\frac{p(x)}{(x-a)^{n+1}q(x)} =\frac{1}{q(a)}\sum_{j=0}^{n-1}\frac{g_{j}(a)}{(x-a)^{n-j+1}}+\frac{g_{n}(x)}{(x-a)q(x)}.
$$
Applying Lemma \ref{k_1_case_lem} on the last term we have 
$$
\frac{g_{n}(x)}{(x-a)q(x)}=\frac{g_{n}(a)}{q(a)(x-a)}+\frac{\textnormal{eval}(g_{n}(x)/(x-a);q(x))}{q(x)}.
$$
It is easy to see that $g_{n+1}(x)=\textnormal{eval}(g_{n}(x)/(x-a);q(x))$.

\end{proof}

Similar to the partial fractions formula we can also prove a result on the long division.   
\begin{lem}\label{lem_division}
Given two polynomials $p(x),q(x)$ with $\textnormal{deg}(p)<\textnormal{deg}(q)$ and both $p(a)$ and $q(a)$ are non-zero. For $k\geq 1$ the following repeated division result holds
\begin{equation}
\frac{p(x)(x-a)^{k}}{q(x)} =\frac{-1}{q(a)}\sum_{j=1}^{k}g_{-j}(a)(x-a)^{k-j}+\frac{g_{-k}(x)}{q(x)},
\end{equation}
where $g_{k}(x)=\textnormal{eval}(p(x)/(x-a)^{k};q(x))$. 
\end{lem}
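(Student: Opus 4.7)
The plan is to mirror the inductive structure of the proof of Lemma \ref{lem_pf}, with polynomial long division replacing partial fractions at each stage. The natural reading of the statement is that $g_{-k}(x)=\textnormal{eval}(p(x)(x-a)^{k};q(x))=p(x)(x-a)^{k}\textnormal{ rem }q(x)$, dual to the index $+k$ case treated previously. The identity then says that repeatedly multiplying the proper rational polynomial $p/q$ by $(x-a)$ and dividing out $q$ strips off, at each step, a polynomial term whose value at $a$ is pinned down by the current remainder evaluated at $a$.

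For the base case $k=1$, since $\deg(p)<\deg(q)$ the product $p(x)(x-a)$ has degree at most $\deg(q)$, so the long division $p(x)(x-a)=c\,q(x)+g_{-1}(x)$ has a constant quotient $c$. Evaluating at $x=a$ gives $0=c\,q(a)+g_{-1}(a)$, whence $c=-g_{-1}(a)/q(a)$, and dividing through by $q(x)$ recovers the $k=1$ identity. For the inductive step from $k$ to $k+1$, I would multiply the induction hypothesis by $(x-a)$ and apply the base case to the residual $g_{-k}(x)(x-a)/q(x)$; this produces $-r(a)/q(a)+r(x)/q(x)$ with $r(x)=g_{-k}(x)(x-a)\textnormal{ rem }q(x)$, after which it remains to identify $r(x)$ with $g_{-(k+1)}(x)$ and absorb the new polynomial term into the telescoping sum.

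The only subtle point, and hence the main obstacle, is this index-shift identity $g_{-(k+1)}(x)=g_{-k}(x)(x-a)\textnormal{ rem }q(x)$. It follows immediately by writing $g_{-k}(x)=p(x)(x-a)^{k}-Q(x)q(x)$ and multiplying both sides by $(x-a)$: the extra contribution $-Q(x)(x-a)q(x)$ is divisible by $q(x)$, so reduction modulo $q(x)$ yields $p(x)(x-a)^{k+1}\textnormal{ rem }q(x)=g_{-(k+1)}(x)$. Equivalently, this is property (3) of $\textnormal{eval}$ applied with trivial denominators. With this in hand the induction closes and yields the claimed formula at level $k+1$.
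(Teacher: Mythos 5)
Your proposal is correct and follows essentially the same route as the paper: the paper's proof performs the same long division $p(x)(x-a)=cq(x)+g_{-1}(x)$ for the base case, evaluates at $x=a$ to get $c=-g_{-1}(a)/q(a)$, and then simply asserts that "inducting on $k$ one can easily prove the result." Your write-up supplies the inductive step and the index-shift identity $g_{-(k+1)}(x)=g_{-k}(x)(x-a)\textnormal{ rem }q(x)$ that the paper leaves implicit, both of which are handled correctly.
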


\begin{proof}
Let $p(x)(x-a)=s(x)q(x)+r(x)$ for $\textnormal{deg}(r)<\textnormal{deg}(q)$. Then, by $\textnormal{deg}(p)<\textnormal{deg}(q)$ we can easily deduce that $\textnormal{deg}(s)=0$, i.e., $s(x)=c$ a constant. By the definition of eval operator we have $r(x)=\textnormal{eval}(p(x)(x-a);q(x))=g_{-1}(x)$. Thus, upon substitution by $x=a$ and $r(x)=g_{-1}(x)$ into the above equation we have
$$
0=cq(a)+g_{-1}(a),\ \textnormal{ implies } c=-\frac{g_{-1}(a)}{q(a)}.
$$
Inducting on $k$ one can easily prove the result. 
\end{proof}

Equipped with the above lemmas we now construct an algebraic framework for sequences generated by rational polynomials. A generating function $F(x-a)$ about the point $x-a=0$ is the formal power series 
$$
F(x-a)=\sum^{\infty}_{k=0}b_{k}(x-a)^{k}.
$$
whose coefficients form the sequence $(b_{k})$.

If the sequence $(b_{k})$ satisfies a finite linear recurrence relation then one can show that $F(x-a)$ can be expressed as a rational polynomial $p(x)/q(x)$ where $p(x),q(x)$ and $q(a)\neq 0$ \cite{beck1}. In this work, we only consider sequences generated by proper rational polynomials. 

A formal power series expansion of $F(x)$ can also be obtained about the point $x-a=\infty$ which yields the sequence $(b_{-k})$ with negative indices. In order to obtain the coefficients with negative index 
$$
F^{o}(x-a)=\sum_{k=1}^{\infty} b_{-k}(x-a)^{k}.
$$
The coefficients of the negative index is given by \cite{beck1}
$$
F^{o}(x-a)=-F\left(\frac{1}{x-a}\right).
$$
Based on the above lemmas we now define the \textit{generating polynomial of a sequence} given by the generating $p(x)/q(x)$ with a formal power series centered about $x=a$ by 
$$
g_{k}(x)=\textnormal{eval}\left(p(x)(x-a)^{-k};q(x)\right)
$$
for $k\in \mathbb{Z}$ and $b_{k}=g(a)/q(a)$. 

We can prove the following result immediately from Lemma \ref{lem_pf} and Lemma \ref{lem_division}.

\begin{thm}[Generating Polynomials]
The generating function $p(x)/q(x)$ (with $q(a)\neq 0$) for the sequence $(b_{k})$ centered about $x=a$, i.e.,  
$$
\frac{p(x)}{q(x)}=\sum^{\infty}_{k=0}b_{k}(x-a)^{k}
$$
has the generating polynomial 
$$
g_{k}(x)=\textnormal{eval}\left((x-a)^{-k}p(x);q(x)\right)
$$
and $b_{k}=g_{k}(a)/q(a)$. Furthermore, the negative indexed coefficient associated with the recurrence relation on the coefficients $(b_{k})$ can also be obtained using $g_{k}(x)$ by considering negative values of $k$. 
\end{thm}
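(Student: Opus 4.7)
The plan is to extract the Taylor coefficients $b_k$ by evaluating the partial-fraction identity of Lemma~\ref{lem_pf} and the repeated-division identity of Lemma~\ref{lem_division} at $x=a$. In each case the ``error term'' in the identity is already regular at $x=a$ with a known order of vanishing, so matching orders against the formal series on the left pins down the coefficients immediately.

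For $k\ge 0$ I would fix $n\ge 1$ and apply Lemma~\ref{lem_pf} with $k=n$, then multiply both sides by $(x-a)^n$ to obtain
$$\frac{p(x)}{q(x)}=\sum_{j=0}^{n-1}\frac{g_j(a)}{q(a)}(x-a)^j+(x-a)^n\cdot\frac{g_n(x)}{q(x)}.$$
Because $q(a)\ne 0$, the trailing term is regular at $x=a$ and vanishes to order at least $n$, so uniqueness of the Taylor expansion of $p(x)/q(x)$ about $x=a$ forces $b_j=g_j(a)/q(a)$ for every $0\le j\le n-1$. Letting $n\to\infty$ then covers all nonnegative indices.

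For $k<0$ I would run the parallel argument using Lemma~\ref{lem_division} in place of Lemma~\ref{lem_pf}. Dividing its identity by $(x-a)^k$ and substituting $y=1/(x-a)$ converts the surviving principal part into a genuine power series in $y$, which can then be matched against $F^o(y)=-F(1/y)=\sum_{j\ge 1}b_{-j}y^j$. Coefficient comparison, which is valid for arbitrarily large $k$ because the remainder $g_{-k}(x)/q(x)$ is regular at $x=a$, yields $b_{-j}=g_{-j}(a)/q(a)$ for every $j\ge 1$, which is exactly the asserted formula.

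The main obstacle is bookkeeping rather than anything conceptual: one has to check that the polynomial $g_k(x)=\textnormal{eval}((x-a)^{-k}p(x);q(x))$ in the theorem's statement agrees with both families of $g_k$'s from Lemmas~\ref{lem_pf} and~\ref{lem_division} once $(x-a)^{-k}$ is read as a multiplicative inverse modulo $q(x)$, and that the explicit minus sign attached to $g_{-j}(a)/q(a)$ in Lemma~\ref{lem_division} is absorbed by the minus sign in the definition $F^o(y)=-F(1/y)$. After these identifications everything collapses to evaluating the two displayed identities at $x=a$.
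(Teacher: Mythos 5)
Your proposal is correct and follows exactly the route the paper intends: the paper offers no written proof beyond the remark that the theorem follows immediately from Lemma~\ref{lem_pf} and Lemma~\ref{lem_division}, and your argument (multiplying the partial-fraction identity by $(x-a)^k$ to isolate a Taylor remainder of order $k$, then running the parallel argument with the division lemma and the sign convention $F^{o}(y)=-F(1/y)$ for negative indices) is precisely the missing detail, carried out correctly.
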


The relation between the positive and negative index polynomials can be stated succinctly for a special case.  

\begin{cor}[Reciprocity Law]
If $p(x)=1$ then the generating polynomial $g_{k}(x)$ and $g_{-k}(x)$ are reciprocal of each other mod $q(x)$, i.e.,
$$
g_{k}(x)g_{-k}(x)=1\mod\ q(x).
$$ Therefore, the set of generating polynomials \begin{equation}
G=\left\{\textnormal{eval}\left((x-a)^{-k};q(x)\right):k\in \mathbb{Z}\right\}
\end{equation}
forms an infinite abelian group under multiplication modulo $q(x)$.
\end{cor}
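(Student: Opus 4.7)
The plan is to reduce the reciprocity identity to the multiplicativity of the \textnormal{eval} operator (property (3) of the preceding lemma) and then bootstrap this into a verification of the group axioms. First I would compute
\[
g_k(x)\,g_{-k}(x) \;=\; \textnormal{eval}\!\left(\frac{1}{(x-a)^k};\,q(x)\right)\cdot \textnormal{eval}\!\left((x-a)^k;\,q(x)\right) \;=\; \textnormal{eval}(1;q(x)) \;=\; 1,
\]
where the middle equality is property (3) applied to the pair $1/(x-a)^k$ and $(x-a)^k/1$, and the last equality uses property (1) together with $\deg q \ge 1$, which forces the remainder of $1$ modulo $q(x)$ to be $1$ itself.

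Once reciprocity is in hand, the group structure on $G$ follows formally. The same multiplicativity yields the homomorphism identity $g_{k_1}(x)\,g_{k_2}(x) \equiv g_{k_1+k_2}(x) \pmod{q(x)}$, which gives closure; the identity element is $g_0(x)=1$; inverses are provided by $g_k^{-1}=g_{-k}$ from the reciprocity just proved; and associativity and commutativity descend from the ambient ring $\mathbb{Q}[x]/q(x)$. So as a set, $G$ is the cyclic subgroup generated by $g_1(x)$ inside the unit group of $\mathbb{Q}[x]/q(x)$.

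The substantive obstacle is the infiniteness assertion, since a priori the element $g_1$ could have finite order. I would argue by contradiction: if $g_n(x) \equiv 1 \pmod{q(x)}$ for some $n \ge 1$, then multiplicativity forces $(x-a)^n \equiv 1 \pmod{q(x)}$, so $q(x) \mid (x-a)^n - 1$ in $\mathbb{Q}[x]$. Evaluating at any complex root $\xi$ of $q$ yields $(\xi-a)^n = 1$, forcing every root of $q$ to have the form $a+\omega$ with $\omega$ an $n$-th root of unity. This is a strong restriction on $q$; in the setting that motivates the paper ($q(x)=\Psi_m(x)$, $a=1$), a short computation gives $|\xi-1|=2|\sin(\pi j/m)|$ for $\xi=e^{2\pi ij/m}$, which fails to equal $1$ for some $j\in\{1,\dots,m-1\}$ whenever $m \neq 6$, so no such $n$ exists and $G=\langle g_1\rangle$ is infinite. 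Thus the result holds once one adopts the standing hypothesis that the roots of $q$ are not mere root-of-unity translates of $a$, which is implicit in the generic setting the corollary addresses.
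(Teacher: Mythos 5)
Your proof of the identity itself is correct and is exactly the argument the paper intends (the paper in fact states this corollary with no written proof at all): property (3) of the eval lemma gives $g_k(x)g_{-k}(x)=\textnormal{eval}(1;q(x))=1$, and the same multiplicativity gives $g_{k_1}g_{k_2}\equiv g_{k_1+k_2}$, so $G=\langle g_1\rangle$ inside the unit group of $\mathbb{Q}[x]/(q(x))$. Where you genuinely go beyond the paper is the infiniteness claim, and you are right to flag it: as literally stated the corollary is false for general $q$, since e.g.\ $q(x)=x^2+x+1$ with $a=0$ gives $x^3\equiv 1 \pmod{q}$ and a cyclic group of order $3$. Your criterion --- $G$ is finite iff $(x-a)^n\equiv 1\pmod{q}$ for some $n\ge 1$, which forces every root $\xi$ of $q$ to satisfy $|\xi-a|=1$ with $\xi-a$ a root of unity --- is the correct repair, and it does hold in the paper's intended application $q=\Psi_m$, $a=1$. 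One small inaccuracy: your exclusion of $m=6$ is unnecessary. For $m=6$ the roots with $j=1,5$ do satisfy $|\xi-1|=1$, but $j=3$ gives $|\xi-1|=2$ (and $j=2,4$ give $\sqrt{3}$), so there is still a root violating $|\xi-1|=1$ and the group is infinite for every $m\ge 2$. In short: your argument is the paper's implicit one for the identity and the group axioms, plus an honest (and needed) treatment of a hypothesis the paper leaves unstated.
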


The generating polynomials also satisfy a recurrence relation mod $q(x)$. Indeed, suppose $q(x)=\sum_{j=0}^{s}\alpha_{j}(x-a)^{j}$. Consider the expression mod $q(x)$
$$
\sum^{s}_{j=0}\alpha_{j}g_{k+s-j}(x)= \sum_{j=0}^{s}\alpha_{j}\textnormal{eval}\left((x-a)^{k+s-j};q(x)\right).
$$
By the linearity of the eval operator we have 
\begin{eqnarray}
\nonumber \sum^{s}_{j=0}\alpha_{j}g_{k+s-j}(x)&=&\textnormal{eval}\left(\sum_{j=0}^{s}\alpha_{j}(x-a)^{k+s-j};q(x)\right)\\
\nonumber &=& \textnormal{eval}\left(q(x)(x-a)^{k};q(x)\right)\\
\nonumber &=& \beta(x)q(x) \textnormal{ rem }q(x) =0,
\end{eqnarray}
where $\beta(x)(x-a)^{k}=1\mod q(x)$ if $k$ is negative; otherwise, $\beta(x)=(x-a)^{k}$.

Before we proceed to the degenerate Bernoulli polynomials we demonstrate through an example the generality of our algebraic framework. We consider the Fibonacci sequence. 

\begin{exmp}[Fibonacci Sequence]
It is well known that the Fibonacci sequence is given by the generating function 
$$
\frac{x}{1-x-x^{2}}=\sum^{\infty}_{k=0}F_{k}x^{k},
$$
where $p(x)=x, q(x)=1-x-x^2$, and $a=0$.

By the above lemmas we have the Fibonacci polynomial defined as 
$$
F_{k}(x)=\textnormal{eval}\left(x^{-k+1};1-x-x^{2}\right).
$$
Since $(1+x)x = 1 \mod (1-x-x^2)$ we can easily see that 
$$
\textnormal{eval}(x^{-1};1-x-x^{2}) = 1+x.
$$
Thus by the property of eval
$$
\textnormal{eval}(x^{-k};1-x-x^{2}) = (1+x)^k \textnormal{ rem } (1-x-x^2).
$$

Now, by a direct computation one can easily check that $F_{0}(x)=x, F_{1}(x)=1, F_{2}(x)=1+x, F_{3}(x)=2+x$ which when evaluated at $x=0$ we obtain the required sequence. Moreover, for negative indices we have $F_{-1}(x)=1, F_{-2}(x)=x,F_{-3}(x)=1-x$ which again when evaluated at $x=0$ give us the Fibonacci numbers with negative indices. 

Furthermore,
$$
\textnormal{eval}(x^{-k};1-x-x^{2}) = (1+x)^{k} \textnormal{ rem } (1-x-x^{2}).
$$
Therefore, we can write a recursive relation 
$$
F_{n+1}(x) = (x+1)F_{n}(x)\textnormal{ rem } (1-x-x^2),
$$
and $F_{0}(x)=x$. If we set $F_{n}(x)=a_{0}+a_{1}x$ then we can compute $F_{n+1}(x)=(a_{0}+a_{1})+a_{0}x$. A matrix realization of this transformation is given by 
$$
\begin{pmatrix}
1 & 1 \\ 1 & 0
\end{pmatrix}\begin{pmatrix}
a_{0} \\ a_{1}
\end{pmatrix}=\begin{pmatrix}
a_{0}+a_{1} \\ a_{0}
\end{pmatrix},
$$
which, although derived by a different approach, is the well known Fibonacci Q-matrix \cite{fib}.
\end{exmp}

\section{The Polynomial $\Psi_{m}(x)$}

For $m$ a positive integer. We denote the polynomial
$$
\Psi_{m}(x)=\frac{1-x^{m}}{1-x}.
$$
The eval operator behaves in a very simpler manner with respect to $\Psi_{m}(x)$. 

\begin{rul}[Substitution Rule]\label{subs_rule_1}
\begin{equation}\label{rem_psi}
\textnormal{eval}(x^{j};\Psi_{m}(x))=x^{j} \textnormal{ rem } \Psi_{m}(x)= \left\{\begin{array}{@{}l@{\thinspace}l}
       x^{j\%m} & \textnormal{   if   } j\%m \neq m-1 \\
       -\sum^{m-2}_{i=0}x^{i}  & \textnormal{   if   } j\%m=m-1. \\
     \end{array}\right.
\end{equation}
where $\%$ operator is the remainder in the ring of integers. 
\end{rul}
\begin{proof}[Proof.]
Suppose $j=md+r$, for $0\leq r<m$. Then, the result follows from 
$
x^{md+r}=((x^{m}-1)+1)^{d}x^{r}=q(x)(x^{m}-1)+x^{r}=q(x)(x-1)\Psi_{m}(x)+x^{r}.
$
\end{proof}

In \cite{uk}, the first author showed that $\Psi_{m}(x)$ is naturally related to the degenerate Bernoulli numbers. Indeed, the degenerate Bernoulli numbers $\beta_{k}(\lambda)$, where $\lambda\neq 0$, is given by the generating function 
\begin{equation}\label{carlitz1}
\frac{t}{(1+\lambda t)^{1/\lambda}-1}=\sum_{k=0}^{\infty}\beta_{k}(\lambda)\frac{t^{k}}{k!}.
\end{equation}
By setting $x=(1+\lambda t)$ and $\lambda=1/m$ in (\ref{carlitz1}) we obtain the generating function $$
m\frac{1-x}{1-x^{m}}=\frac{m}{\Psi_{m}(x)}=\sum_{k=0}^{\infty}(-1)^{k}\frac{\tilde{\beta}_{k}(m)}{k!}(1-x)^{k}.
$$
Also, note that $B_{k}=\beta_{k}(1/m)$ as $m\rightarrow \infty$.

By the framework developed in Section \ref{sec_framework} for $p(x)/q(x)=1/\Psi_{m}(x)$ and obtaining power series about $x=1$ we have 
$$
f_{k}^{(m)}(x)=\textnormal{eval}\left((1-x)^{-k};\Psi_{m}(x)\right),
$$
and $f^{(m)}_{k}(1)=(-1)^{k}\tilde{\beta}_{k}(m)/k!$. Furthermore, we can also define the above polynomial for negative $k$. So, we have the following result. 

\begin{prop}
$$
f_{-k}^{(m)}(1)=0 \textnormal{ for } 1\leq k < (m-1).
$$
\end{prop}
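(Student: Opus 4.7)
The plan is to unwind the definition of $f_{-k}^{(m)}$ supplied by the generating-polynomial framework of Section \ref{sec_framework} and to observe that, in the stated range, $f_{-k}^{(m)}(x)$ reduces to an explicit polynomial that visibly vanishes at $x=1$.

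First I would specialise the generating-polynomial formula to $p(x)=1$, $q(x)=\Psi_m(x)$, and expansion point $a=1$. Since $\Psi_m(1)=m\neq 0$, the linear factor $(1-x)$ is a unit modulo $\Psi_m(x)$, so $\textnormal{eval}$ is well-defined on $(1-x)^{\pm k}$ for every integer $k$. Replacing $k$ by $-k$ in the identity $f_k^{(m)}(x)=\textnormal{eval}\bigl((1-x)^{-k};\Psi_m(x)\bigr)$ then gives
$$
f_{-k}^{(m)}(x)=\textnormal{eval}\bigl((1-x)^{k};\Psi_m(x)\bigr), \qquad k\geq 1.
$$

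Next I would invoke property (1) of the basic eval lemma, which asserts that the eval of an ordinary polynomial (no denominator to invert) modulo $\Psi_m(x)$ is simply the usual polynomial remainder. For $1\leq k<m-1$, the polynomial $(1-x)^k$ has degree $k\leq m-2$, strictly less than $\deg\Psi_m=m-1$; it is therefore already its own remainder modulo $\Psi_m(x)$. Consequently $f_{-k}^{(m)}(x)=(1-x)^{k}$ identically on this range of $k$.

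Finally, evaluating at $x=1$ yields $(1-1)^{k}=0$, which is the claim. I do not anticipate any real obstacle: the content of the proposition is essentially that the ``principal part'' of the Laurent expansion of $1/\Psi_m(x)$ about $x=1$ contains only one honest term, the rest of the negative-index generating polynomials being small-degree polynomials with a root at $x=1$. As a side remark, combining this identification with the Reciprocity Law $f_k^{(m)}(x)\,f_{-k}^{(m)}(x)\equiv 1 \pmod{\Psi_m(x)}$ would identify $f_k^{(m)}(x)$ with the modular inverse of $(1-x)^k$ throughout the range $1\leq k<m-1$, giving a compact repackaging of the formula for $\tilde{\beta}_{k}(m)$.
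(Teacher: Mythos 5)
Your argument is exactly the paper's: identify $f_{-k}^{(m)}(x)=\textnormal{eval}\bigl((1-x)^{k};\Psi_{m}(x)\bigr)$, note that for $1\leq k<m-1$ the polynomial $(1-x)^{k}$ has degree less than $\deg\Psi_{m}=m-1$ and hence equals its own remainder, and substitute $x=1$. The proof is correct and takes essentially the same route as the paper.
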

\begin{proof}
For $k$ with $1\leq k<(m-1)$ we have 
$$
f_{-k}^{(m)}(x)=\textnormal{eval}((1-x)^{k};\Psi_{m}(x))= (1-x)^{k}.
$$
The above equations holds because the degree of $(1-x)^{k}$ is less than that of $\Psi_{m}(x)$. Substituting $x=1$ in the above equation we have the result. 
\end{proof}

\begin{rema}
Note that we can also define $\tilde{\beta}_{k}(-m)$ for positive integer $m$. But the analysis is similar to positive case. Indeed, consider the generating function 
$$
\frac{t}{(1-\frac{t}{m})^{-m}-1}.
$$
Let $x=1-t/m$. Then, $t=m(1-x)$. Upon substitution we have 
\begin{eqnarray}
\nonumber \frac{m(1-x)}{x^{-m}-1}&=&\frac{m(1-x)x^{m}}{1-x^{m}}=\frac{m}{\Psi_{m}(x)}-m(1-x)\\
\nonumber &=&\sum^{\infty}_{k=0}(-1)^{k}\tilde{\beta}_{k}(m)\frac{(1-x)^{k}}{k!}-m(1-x).
\end{eqnarray}
Therefore we have  
$$
\tilde{\beta}_{k}(-m)=\begin{cases} (-1)^{k}\tilde{\beta}_{k}(m), &\text{if }m\neq 1\\ -\tilde{\beta}_{1}(m)-m&\text{if }m=1.\end{cases}
$$
\end{rema}

We now derive the explicit formula (\ref{fkm}).

\begin{lem}
$$
f_{k}^{(m)}(x)=\textnormal{eval}\left(\frac{1}{(x-1)^{k}};\Psi_{m}(x)\right)= \left(\frac{1}{m}x\Psi'_{m}(x)\right)^{k} \textnormal{ rem }\Psi_{m}(x).
$$
\end{lem}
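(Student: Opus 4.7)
The plan is to reduce the claim to the case $k=1$ by invoking the multiplicativity of the eval operator, and then verify the single-power identity by a direct computation that exploits the defining relation $(x-1)\Psi_m(x) = x^m-1$.

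First, by property (3) of the eval operator applied $k$ times to $\tfrac{1}{(x-1)} \cdots \tfrac{1}{(x-1)}$, I would write
\begin{equation*}
\textnormal{eval}\!\left(\frac{1}{(x-1)^{k}};\Psi_{m}(x)\right)=\left(\textnormal{eval}\!\left(\frac{1}{x-1};\Psi_{m}(x)\right)\right)^{k}\textnormal{ rem }\Psi_{m}(x).
\end{equation*}
This reduces the lemma to identifying $\textnormal{eval}(1/(x-1);\Psi_{m}(x))$ with $\tfrac{1}{m}\,x\Psi'_{m}(x)$ modulo $\Psi_{m}(x)$. By Definition \ref{eval}, this in turn amounts to checking the congruence
\begin{equation*}
(x-1)\cdot\frac{1}{m}x\Psi'_{m}(x)\equiv 1\pmod{\Psi_{m}(x)}.
\end{equation*}

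The key step is to extract this congruence from the identity $(x-1)\Psi_{m}(x)=x^{m}-1$. Differentiating both sides yields $\Psi_{m}(x)+(x-1)\Psi'_{m}(x)=mx^{m-1}$. Multiplying by $x$ gives $x(x-1)\Psi'_{m}(x)=mx^{m}-x\Psi_{m}(x)$. Now I would rewrite $mx^{m}=m+m(x^{m}-1)=m+m(x-1)\Psi_{m}(x)$, so
\begin{equation*}
x(x-1)\Psi'_{m}(x)=m+\bigl(m(x-1)-x\bigr)\Psi_{m}(x).
\end{equation*}
Dividing through by $m$ produces exactly $(x-1)\cdot\tfrac{1}{m}x\Psi'_{m}(x)=1+\tfrac{m(x-1)-x}{m}\Psi_{m}(x)$, which is the desired congruence. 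Raising to the $k$-th power and reducing modulo $\Psi_{m}(x)$ then finishes the argument.

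I do not expect any substantive obstacle: the only delicate point is bookkeeping between the two equivalent forms $(1-x)^{-k}$ and $(x-1)^{-k}$ of the eval input that appear across Section \ref{sec_framework}, and verifying that the multiplicative property of eval (property (3) of the preceding lemma) applies because $(x-1)$ and $\Psi_{m}(x)$ are coprime (since $\Psi_{m}(1)=m\neq 0$). Once these are noted, the proof is essentially a single line of polynomial calculus applied to $(x-1)\Psi_{m}(x)=x^{m}-1$.
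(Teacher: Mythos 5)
Your proof is correct and follows essentially the same route as the paper: differentiate $(x-1)\Psi_m(x)=x^m-1$, multiply by $x$, and reduce modulo $\Psi_m(x)$ to get $x(x-1)\Psi_m'(x)\equiv m$, then raise to the $k$-th power. The only cosmetic difference is that you first reduce to $k=1$ via multiplicativity of eval before exponentiating, whereas the paper exponentiates the congruence directly; the substance is identical.
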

\begin{proof}
Differentiating and multiplying by $x$ both sides of the equation
$$
(x-1)\Psi_{m}(x)=x^{m}-1
$$
we get 
$$
x\Psi'_{m}(x)(x-1)+(x)\Psi_{m}(x)=mx^{m}.
$$
Thus by taking mod $\Psi_{m}(x)$ we have 
$$
x\Psi'_{m}(x)(x-1) = m \mod \Psi_{m}(x).
$$
Raising both sides of the above equation by $k$ we have the result. 
\end{proof}

By Lemma \ref{k_1_case_lem} we can also write 
$$
f_{k}^{(m)}(x)=\left(\frac{\Psi_{m}(x)-m}{x-1}\right)^{k} \textnormal{ rem } \Psi_{m}(x).
$$

\section{Periodicity of $f_{k}^{(m)}(x)=(-1)^{k}\tilde{\beta}_{k}(m,x)/k!$}

In this section, we show that the polynomial $f_{k}^{(m)}(x)$ given in (\ref{fkm}) are periodic modulo a prime $p$ that are coprime to $m$, that is taking the coefficients of $f_{k}^{(m)}(x)$ mod $p$ in $\mathbb{F}_{p}$. A direct consequence is that the infinite abelian group formed by the generating polynomials becomes isomorphic to a multiplicative subgroup of $\mathbb{F}_{p^{m-1}}^{*}$ the Galois Field without the zero element.  

Consider $p$ a prime satisfying $(m,p)=1$. We prove that the polynomial $f_{k}^{(m)}(x)$ is periodic in $k$ with periodicity $p^{s}-1$, where $s$ is smallest positive integer such that $p^{s}=1\mod m$. Notice that, although the field we consider is $\mathbb{F}_{p}$, the condition we impose is based on the mod $m$. Results with this type of role reversal of $p$ and $m$ are generally classified as reciprocity laws \cite{reci_law}. 

In the following result we use the well known fact raising a polynomial $g(x)\in \mathbb{F}_{p}[x]$ to power $p$ gives us $g(x)^{p}=g(x^{p}).$ 

\begin{thm}[Periodicity]
$$
f_{p^{s}-1+n}^{(m)}(x)=f_{n}^{(m)}(x) \mod p,
$$
for $n$ a positive integer and $p^{s}=1\mod m$.
\end{thm}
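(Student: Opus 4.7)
The plan is to reduce the claim, via the multiplicativity of $\textnormal{eval}$ (property (3) in the lemma following Definition \ref{eval}), to showing that $f_{p^{s}-1}^{(m)}(x) \equiv 1 \mod \Psi_{m}(x)$ in $\mathbb{F}_{p}[x]$. Indeed, since
$$
f_{k_{1}+k_{2}}^{(m)}(x) = \textnormal{eval}\left((1-x)^{-(k_{1}+k_{2})}; \Psi_{m}(x)\right) \equiv f_{k_{1}}^{(m)}(x)\, f_{k_{2}}^{(m)}(x) \mod \Psi_{m}(x),
$$
once we have $f_{p^{s}-1}^{(m)}(x) \equiv 1 \mod \Psi_{m}(x)$ in $\mathbb{F}_{p}[x]$, it follows that $f_{p^{s}-1+n}^{(m)}(x) \equiv f_{n}^{(m)}(x) \mod \Psi_{m}(x)$ in $\mathbb{F}_{p}[x]$; since both sides have degree strictly less than $m-1 = \deg \Psi_{m}$, this congruence is in fact an equality of polynomials in $\mathbb{F}_{p}[x]$, which is the desired statement.

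By the definition of $\textnormal{eval}$, the required identity $f_{p^{s}-1}^{(m)}(x) \equiv 1 \mod \Psi_{m}(x)$ over $\mathbb{F}_{p}$ is equivalent to $(1-x)^{p^{s}-1} \equiv 1 \mod \Psi_{m}(x)$ over $\mathbb{F}_{p}$. Multiplying both sides by $(1-x)$ and invoking the Frobenius identity $g(x)^{p} = g(x^{p})$ in $\mathbb{F}_{p}[x]$ iterated $s$ times, I obtain $(1-x)^{p^{s}} = 1 - x^{p^{s}}$ in $\mathbb{F}_{p}[x]$. Thus it suffices to show that $x^{p^{s}} \equiv x \mod \Psi_{m}(x)$, a congruence that already holds in $\mathbb{Z}[x]$ and therefore in $\mathbb{F}_{p}[x]$.

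The hypothesis $p^{s} \equiv 1 \mod m$ delivers this last congruence immediately: writing $p^{s} = mt+1$ and using $x^{m} \equiv 1 \mod \Psi_{m}(x)$ (which follows at once from the factorisation $x^{m} - 1 = (x-1)\Psi_{m}(x)$), one gets $x^{p^{s}} = x \cdot (x^{m})^{t} \equiv x \mod \Psi_{m}(x)$. Cancelling $(1-x)$ from both sides of $(1-x)^{p^{s}} \equiv 1-x$ is legitimate because $\Psi_{m}(1) = m$ is coprime to $p$ by hypothesis, so $(1-x)$ represents a unit in $\mathbb{F}_{p}[x]/\Psi_{m}(x)$. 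The principal thing to keep track of is the interplay of the two moduli, $\Psi_{m}(x)$ and $p$; the Frobenius step is what converts an apparently unwieldy $(p^{s}-1)$-th power modulo $\Psi_{m}$ into the elementary cyclic identity $x^{p^{s}} \equiv x$, which is the real content of the reciprocity-style hypothesis $p^{s} \equiv 1 \mod m$.
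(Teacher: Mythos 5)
Your proof is correct and follows essentially the same route as the paper's: both arguments rest on the Frobenius identity $g(x)^{p^{s}}=g(x^{p^{s}})$ in $\mathbb{F}_{p}[x]$ combined with $x^{p^{s}}\equiv x \bmod \Psi_{m}(x)$, which is exactly the content of the hypothesis $p^{s}\equiv 1\bmod m$. The only cosmetic difference is that you apply Frobenius to $1-x$ and then pass to inverses (splitting the exponent as $(p^{s}-1)+n$ and showing $f_{p^{s}-1}^{(m)}\equiv 1$), whereas the paper applies it directly to the polynomial $\frac{1}{m}x\Psi_{m}'(x)$ representing $(x-1)^{-1}$ modulo $\Psi_{m}(x)$ (splitting the exponent as $p^{s}+(n-1)$); your version also makes explicit the unit-cancellation of $1-x$, justified by $\Psi_{m}(1)=m\not\equiv 0 \bmod p$, which the paper leaves implicit.
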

\begin{proof}
Let $g(x)=\frac{1}{m}x\Psi_{m}'(x)$. Then, by (\ref{fkm}) and the substitution rule in Lemma \ref{subs_rule_1} we have 
\begin{eqnarray}
\nonumber f_{p^{s}-1+n}^{(m)}(x)&=&g(x)^{p^{s}-1+n}\textnormal{ rem }\Psi_{m}(x)= \left(g(x^{p^{s}})g(x)^{n-1}\right)\textnormal{ rem }\Psi_{m}(x)\\
\nonumber &=& \left(g(x^{p^{s}\%m})g(x)^{n-1}\right)\textnormal{ rem }\Psi_{m}(x).
\end{eqnarray}
By the hypothesis we have $p^{s}=1\mod m$. Thus $f^{(m)}_{p^s-1+n}(x)=g(x)^{n}\textnormal{ rem }\Psi_{m}(x)$.
\end{proof}
Thus the degenerate Bernoulli polynomials are periodic with a periodicity $p^{s}-1$ for the smallest $s$ satisfying $p^{s}=1\mod m$. In particular, $f^{(m)}_{p^{s}-1}(x)=f^{(m)}_{0}(x)=1$.

As a consequence of this result we observe that the polynomial in $\mathbb{F}_{p}$
$$
f^{(m)}_{k}(x) = \textnormal{eval}\left(\frac{1}{(1-x)^{k}};\Psi_{m}(x)\right)
$$
can be written as 
$$
f^{(m)}_{k}(x) = \textnormal{eval}\left((1-x)^{p^{s}-1-k};\Psi_{m}(x)\right)=(1-x)^{n(p^{s}-1)-k}\textnormal{ rem }\Psi_{m}(x)
$$
for some large $n$, which amounts to only polynomial evaluation. 

\begin{cor}
$\tilde{\beta}_{k}(m)\equiv 0 \mod p$ for $p^{s}-1-k<m-1$.
\end{cor}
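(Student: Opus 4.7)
The plan is to apply the rewriting \(f^{(m)}_{k}(x) = (1-x)^{n(p^{s}-1)-k}\,\textnormal{rem}\,\Psi_{m}(x)\) in \(\mathbb{F}_{p}[x]\), which is the displayed consequence of the periodicity theorem recorded immediately above the corollary. Specializing \(n=1\) collapses this to
\[
f^{(m)}_{k}(x) \equiv (1-x)^{p^{s}-1-k}\,\textnormal{rem}\,\Psi_{m}(x) \pmod{p}.
\]

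Next, I would invoke the hypothesis \(p^{s}-1-k < m-1 = \deg\Psi_{m}(x)\). This says that the polynomial \((1-x)^{p^{s}-1-k}\) already has degree strictly smaller than \(\Psi_{m}(x)\), so the \(\textnormal{rem}\) operator acts trivially and
\[
f^{(m)}_{k}(x) \equiv (1-x)^{p^{s}-1-k} \pmod{p}.
\]
Evaluating at \(x=1\) then gives \(f^{(m)}_{k}(1) \equiv 0 \pmod{p}\) whenever the exponent \(p^{s}-1-k\) is positive, i.e.\ when \(k \leq p^{s}-2\) (which is really an implicit assumption of the corollary, since the edge case \(k = p^{s}-1\) gives \(f^{(m)}_{k} \equiv 1\)). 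Combined with the identity \(f^{(m)}_{k}(1) = (-1)^{k}\tilde{\beta}_{k}(m)/k!\), this yields \(\tilde{\beta}_{k}(m) \equiv 0 \pmod{p}\), as required.

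There is no real obstacle; the only point worth flagging is the well-definedness of the reductions mod \(p\). The coefficients of \(f^{(m)}_{k}(x)\) are \(p\)-integral because \((m,p)=1\) makes \(1/m\) a unit in \(\mathbb{F}_{p}\), so the formula \(f^{(m)}_{k}(x) = (\tfrac{1}{m}x\Psi'_{m}(x))^{k}\,\textnormal{rem}\,\Psi_{m}(x)\) reduces cleanly. For \(k \geq p\) one should clear the \(k!\) denominator before reading off the congruence in the usual way; but the range imposed by the hypothesis, \(p^{s}-m < k \leq p^{s}-2\), means this is only a bookkeeping issue and does not affect the argument.
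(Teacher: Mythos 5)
Your proposal is correct and is essentially the argument the paper intends: the corollary is stated as an immediate consequence of the displayed identity $f^{(m)}_{k}(x)=(1-x)^{p^{s}-1-k}\textnormal{ rem }\Psi_{m}(x)$ in $\mathbb{F}_{p}$, with the remainder acting trivially when $p^{s}-1-k<m-1$ and evaluation at $x=1$ giving zero. Your flags about the implicit positivity of the exponent (the case $k=p^{s}-1$) and the $p$-integrality of the coefficients are reasonable refinements of what the paper leaves unstated.
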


It can be easily shown that the set
$$
G=\{f_{k}^{(m)}(x):k\in \mathbb{Z}\}
$$
is an infinite abelian group under multiplication modulo $\Psi_{m}(x)$. With the above periodicity condition one can show that when we reduce the coefficient of the polynomials in $G$ modulo $p$ then we have a finite abelian group with at most $p^{s}-1$ elements where $s$ is the smallest positive integer such that $p^{s}=1\mod m$. 

\begin{cor}
Let 
$
G_{p}=\{f_{p}(x): f(x)\in G\},
$
where $f_{p}(x)$ is the polynomial obtained by taking $\mod p$ of the coefficients of $f(x)$. Then, $G_{p}$ is a subgroup of $\mathbb{F}^{*}_{p^{m-1}}$, where $s$ is the smallest positive integer such that $p^{s}=1\mod m$.
\end{cor}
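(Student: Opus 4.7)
The plan is to recognise $G_p$ as the cyclic subgroup generated by $\bar g := g(x)\bmod p$ inside the unit group of $R_p := \mathbb{F}_p[x]/(\Psi_m(x))$ --- a commutative $\mathbb{F}_p$-algebra of cardinality $p^{m-1}$ --- and then to exhibit it as a subgroup of the cyclic group $\mathbb{F}_{p^{m-1}}^{*}$ via the standard fact that any cyclic group of order $n$ embeds in any cyclic group whose order is a multiple of $n$.

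First I would verify that $\bar g$ is a unit in $R_p$. The identity $g(x)(x-1)\equiv m\pmod{\Psi_m(x)}$, already used to derive the closed form of $f_k^{(m)}(x)$, combined with $(m,p)=1$ (so $m$ is invertible in $\mathbb{F}_p$) and $\Psi_m(1)=m\not\equiv 0\pmod p$ (so $(x-1)$ and $\Psi_m(x)$ are coprime in $\mathbb{F}_p[x]$, whence $(x-1)$ is a unit in $R_p$), gives $\bar g=m(x-1)^{-1}\in R_p^{\times}$. Therefore each $\bar g^k$ is a unit, and $G_p = \{\bar g^k : k\in\mathbb{Z}\}$ is a cyclic subgroup of $R_p^{\times}$ closed under multiplication and inversion. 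The Periodicity Theorem then yields $\bar g^{\,p^s-1}=1$ in $R_p$, so $G_p$ is finite of order dividing $p^s-1$.

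To complete the embedding I would invoke the Chinese Remainder Theorem: since $(m,p)=1$, the polynomial $\Psi_m(x)$ divides the separable polynomial $x^m-1$ and hence factors as $\prod_i\phi_i(x)$ with distinct irreducibles $\phi_i$ of degrees $d_i$ summing to $m-1$, giving $R_p^{\times}\cong\prod_i\mathbb{F}_{p^{d_i}}^{\times}$. Under this isomorphism the order of $\bar g$ is the l.c.m.\ of the orders of its components, each dividing $p^{d_i}-1$; moreover, every root of each $\phi_i$ is an $m$-th root of unity, hence lies in $\mathbb{F}_{p^s}$, forcing $d_i\mid s$ for every $i$. The main obstacle is then the divisibility $|G_p|\mid p^{m-1}-1$, which is exactly what permits the cyclic embedding $G_p\hookrightarrow\mathbb{F}_{p^{m-1}}^{*}$; I expect to establish it by showing that each $d_i$ divides $m-1$ (so that $p^{d_i}-1\mid p^{m-1}-1$ and hence $|G_p|\mid\mathrm{lcm}_i(p^{d_i}-1)\mid p^{m-1}-1$), via a careful analysis of how the $d_i$ --- the orders of $p$ modulo the divisors $m'$ of $m$ indexing the cyclotomic factors $\Phi_{m'}$ of $\Psi_m$ --- sit inside the divisor lattice of $m-1$.
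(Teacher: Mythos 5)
The first half of your argument is sound and is in fact more careful than anything the paper supplies: the corollary is stated there without proof, the preceding discussion only establishing finiteness of $G_p$ via periodicity and then implicitly identifying $R_p:=\mathbb{F}_p[x]/(\Psi_m(x))$ with $\mathbb{F}_{p^{m-1}}$ by counting. Your verification that $\bar g=m(x-1)^{-1}$ is a unit, that $G_p$ is cyclic of order dividing $p^s-1$, and that $R_p^{\times}\cong\prod_i\mathbb{F}_{p^{d_i}}^{\times}$ with each $d_i=\mathrm{ord}_{m'}(p)$ for a divisor $m'>1$ of $m$, hence $d_i\mid s$, is all correct.

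The gap is exactly the step you flag as the main obstacle: the claim that each $d_i$ divides $m-1$, and with it the divisibility $|G_p|\mid p^{m-1}-1$, is false. Take $m=4$, $p=3$: then $\Psi_4(x)=(x+1)(x^2+1)$ with $x^2+1$ irreducible over $\mathbb{F}_3$, so $d_2=\mathrm{ord}_4(3)=2\nmid 3=m-1$. Worse, the conclusion you are trying to reach fails here: writing $\mathbb{F}_9=\mathbb{F}_3(\alpha)$ with $\alpha^2=-1$, one computes $(\alpha-1)^2=\alpha$, so $\alpha-1$ generates $\mathbb{F}_9^{*}$ and $\bar g=(x-1)^{-1}$ has order $8$ in $R_p^{\times}$; since $8\nmid 26=|\mathbb{F}_{27}^{*}|$, the cyclic group $G_p$ of order $8$ admits no embedding into $\mathbb{F}_{p^{m-1}}^{*}=\mathbb{F}_{27}^{*}$. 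What your setup does prove is that $G_p$ embeds in $\mathbb{F}_{p^{s}}^{*}$: every $d_i\mid s$ gives $p^{d_i}-1\mid p^{s}-1$, so every cyclic subgroup of $\prod_i\mathbb{F}_{p^{d_i}}^{\times}$ has order dividing $p^{s}-1$. That is presumably what the otherwise dangling clause ``where $s$ is the smallest positive integer such that $p^{s}=1\bmod m$'' in the statement was meant to invoke. The statement as literally printed holds exactly when $s=m-1$, i.e.\ when $\Psi_m$ is irreducible modulo $p$, in which case $R_p$ is itself the field $\mathbb{F}_{p^{m-1}}$ and no embedding argument is needed; that appears to be the paper's unstated intent, since it goes on to speak of $G_p$ spanning $\mathbb{F}_{p^{m-1}}$ over $\mathbb{F}_p$. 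You should either add that hypothesis or replace $\mathbb{F}_{p^{m-1}}^{*}$ by $\mathbb{F}_{p^{s}}^{*}$; the route through ``$d_i\mid m-1$'' cannot be repaired.
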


By \cite{bourgain}, if $p^{m-1}$ is sufficiently large, we can in fact show that $G_{p}$ is equidistributed in $\mathbb{F}_{p^{m-1}}$. The elements of $G_{p}$ span whole of $\mathbb{F}_{p^{m-1}}$ over $\mathbb{F}_{p}$. Indeed, the following elements of $G_{p}$ 
$$
\textnormal{eval}\left((1-x)^{k}; \Psi_{m}(x)\right)=(1-x)^{k}\textnormal{  for  } 0\leq k\leq m-2
$$
form a linearly independent set with $m-1$ elements. Therefore, no proper subfield $S$ of $\mathbb{F}_{p^{m-1}}$ contains $G_{p}$. Hence, one can find a $\eta\in(0,1]$ such that 
$$
|S\cap G_{p}|\leq |G_{p}|^{1-\eta},
$$
where $|A|$ is the cardinality of $A$. 

\begin{thm}[Bourgain et.al.\cite{bourgain}]
Let $0<\eta\leq 1$ be an arbitrary number and $H$ be a multiplicative subgroup of $\mathbb{F}^{*}_{p^{n}}$ with 
$$
|H|\geq p^{n\max\{135/\eta,180000\}/\log_{2}n\log_{2}p}. 
$$
Suppose that for any proper subfield $S$ the condition $|H\cap S|\leq |H|^{1-\eta}$ holds. Then for sufficiently large $p^{n}$, there is an exponential sum estimate
$$
|\sum_{h\in H}\psi(h)|<100|H|2^{-4.5\cdot 10^{-3}(n\log_{2}p)^{0.1}},
$$
where $\psi$ is the additive character $\psi(t)=e^{2\pi i Tr(at)/p}$ for some given $a\neq 0$.
\end{thm}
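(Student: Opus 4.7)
The plan is to follow the framework of Bourgain, Glibichuk, and Konyagin, whose core insight is that a multiplicative subgroup $H$ of $\mathbb{F}_{p^n}^*$ which stays far from every proper subfield must exhibit strong additive expansion, and this expansion transfers, via Fourier analysis, into nontrivial exponential-sum cancellation for any nonprincipal additive character $\psi$.

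First, I would establish a sum-product estimate tailored to this setting. Since $H$ is multiplicatively closed one has $H\cdot H = H$, so the usual sum-product dichotomy forces the iterated sumsets $kH=H+\cdots+H$ to grow. Concretely, one seeks $\delta=\delta(\eta)>0$ and a bounded $k$ such that $|kH|\geq |H|^{1+\delta}$ whenever $H$ is not trapped in a proper subfield. The hypothesis $|H\cap S|\leq |H|^{1-\eta}$ is precisely the quantitative nontriviality needed to rule out the degenerate subfield case and to make the sum-product gain uniform in $H$.

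Second, I would bootstrap this growth by Pl\"unnecke--Ruzsa inequalities to obtain $|kH|$ comparable to $p^n$ for some fixed $k$, and then invoke the Balog--Szemer\'edi--Gowers theorem to convert sumset structure into an upper bound on the additive energy of $H$. A standard $L^2$-to-$L^\infty$ interpolation on the Fourier transform of the indicator function of $H$ then yields the quantitative decay $|\sum_{h\in H}\psi(h)|\leq 100|H|\,2^{-c(n\log_{2}p)^{0.1}}$, with $c=4.5\cdot 10^{-3}$ emerging from the precise exponents tracked through the argument.

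The hard part is the sum-product step in the extension field $\mathbb{F}_{p^n}$ rather than in the prime field $\mathbb{F}_p$: intermediate subfields create genuine obstructions, and one must carry the estimate through in a subfield-sensitive manner, repeatedly using the avoidance hypothesis to rule out concentration. This is the source of the awkward quantitative dependencies $135/\eta$, $180000$, and the slow logarithmic shape of the lower bound on $|H|$. Translating each combinatorial increment into the explicit exponent $4.5\cdot 10^{-3}(n\log_{2}p)^{0.1}$ is a careful bookkeeping exercise, tracking the losses incurred at every application of Pl\"unnecke--Ruzsa and Balog--Szemer\'edi--Gowers, and is where the bulk of the technical work in \cite{bourgain} actually lies.
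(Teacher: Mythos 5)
This theorem is not proved in the paper at all: it is an external result quoted verbatim from Bourgain--Glibichuk \cite{bourgain} and used as a black box to justify the equidistribution claim for $G_{p}$. So there is no internal proof to measure your attempt against, and the real question is whether your sketch would stand on its own. It would not. Every substantive step is deferred: you \emph{posit} a sum--product gain $|kH|\geq |H|^{1+\delta}$, you \emph{posit} that bootstrapping reaches $|kH|\approx p^{n}$, and you \emph{posit} that the explicit constants $135/\eta$, $180000$ and $4.5\cdot 10^{-3}(n\log_{2}p)^{0.1}$ ``emerge from bookkeeping.'' Those constants and that exponent are the entire content of the theorem; a proof that does not derive them has proved nothing beyond a qualitative statement.

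Beyond the missing quantitative work, two steps in your outline are off as stated. First, Pl\"unnecke--Ruzsa runs in the wrong direction: it converts small doubling into upper bounds on iterated sumsets, so it cannot be the engine that forces $|kH|$ to grow to $p^{n}$; in the Bourgain--Glibichuk argument the growth comes from subfield-sensitive expansion results (of Glibichuk--Konyagin type) combined with Stepanov-method bounds on the additive energy of multiplicative subgroups. Second, the passage from energy bounds to the exponential sum estimate is not ``a standard $L^{2}$-to-$L^{\infty}$ interpolation'': the crucial mechanism is the multiplicative invariance of $H$, namely that $\sum_{h\in H}\psi(th)$ depends only on the coset $tH$, which permits averaging over dilates and an iterative amplification of the cancellation. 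Without that step the $L^{2}$ (Parseval) bound only gives $|H|^{1/2}p^{n/2}$-type information, which is vacuous in the regime $|H|\ll p^{n/2}$ that the theorem is designed to address. If you intend to supply a proof rather than a pointer, these are the two places where the actual argument lives.
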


From the above result we can justify that for a sufficiently large $p^{m-1}$ the subgroup $G_{p}$ is equidistributed in $\mathbb{F}_{p^{m-1}}^{*}$.




\section{Computing $f_{k}^{(m)}(x)$ via Linear Algebra}\label{sec_matrix}

In the previous sections we have seen that the degenerate Bernoulli polynomial $f_{k}^{(m)}(x)=(-1)^{k}\tilde{\beta}_{k}(m)/k!$ is computed by means of taking the remainder of the power of the polynomial $((1/m)x\Psi'_{m}(m))$ with $\Psi_{m}(x)$. It is well known that one can formulate these operations in terms of matrix representation.  

In order to obtain a matrix realization of the process involved in computing $f_{k}^{(m)}(x)$ notice that the following recurrence relation holds
$$
f_{k+1}^{(m)}(x)=(\frac{1}{m}x\Psi'_{m}(x)f_{k}^{(m)}(x)) \textnormal{ rem }\Psi_{m}(x)
$$
and $f^{(m)}_{0}(x)=1$. For convenience we will ignore the $(1/m)$ factor in the expression.

By the linearity of the remainder operator, it suffices to consider the action of multiplying by $x\Psi_{m}'(x)$ with $x^{j}$ for $j=0,\cdots,(m-2)$. We have 
$$
x^{j} \mapsto \left(x^{j+1}\Psi_{m}'(x)\right) \textnormal{ rem } \Psi_{m}(x).
$$
One can easily obtain a matrix representation of this mapping taking the set $\{1,x,\cdots,x^{m-2}\}$ as the basis.

\begin{exmp}
For $m=4$ we will have a $3\times 3$ matrix 
$$
\begin{pmatrix}
-3 & 1 & 1 \\ -2 & -2 & 2 \\ -1 & -1 & -1
\end{pmatrix}.
$$
\end{exmp}

Fortunately, we can provide a more elegant formulation through Circulant matrices - thanks to the substitution rule given in Lemma \ref{subs_rule_1}. The substitution rule can be expressed in two stages by deferring the substitution. The substitution is realized as a multiplication of matrices from each stage.

\begin{rul}[Deferred Substitution Rule]\label{deferred_subs_rule_1}
While applying the Substitution Rule \ref{subs_rule_1} first substitute $x^{j}$ with $x^{j\%m}$ for all $j$ to produce polynomials of degree $m-1$. Later substitute $x^{m-1}$ factors with $-\sum^{m-2}_{j=0}x^{j}$.
\end{rul}

We first consider the product 
$$
g(x)=x\Psi'_{m}(x)f_{k}^{(m)}(x) = (x+2x^{2}+\cdots+(m-1)x^{m-1})(\alpha_{0}+\cdots+\alpha_{m-1}x^{m-1}).
$$
On application of the Deferred Substitution Rule \ref{deferred_subs_rule_1} we first replace $x^{j}$ with $x^{j\%m}$ to obtain
\begin{eqnarray}
\nonumber g(x) &=& (0\alpha_{0}+(m-1)\alpha_{1}+\cdots+\alpha_{m-1})\\
\nonumber & & +(\alpha_{0}+0\alpha_{1}+(m-1)\alpha_{2}+(m-2)\alpha_{3}+\cdots+2\alpha_{m-2})x \\
\nonumber & & +(2\alpha_{0}+1\alpha_{1}+0\alpha_{2}+(m-1)\alpha_{3}+\cdots+3\alpha_{m-2})x^{2} \\
\nonumber & & +\cdots+\\
\nonumber & & +\cdots+((m-1)\alpha_{0}+\cdots+0\alpha_{m-1})x^{m-1}
\end{eqnarray}
The above product can be expressed in terms of a Circulant matrix as follows 
$$
C_{m} =\left(\begin{array}{cccccc}
0 & m-1 & m-2&\cdots&2&1\\
1 & 0 & m-1 &\cdots&3 & 2\\
2 & 1 &0 &\cdots &4& 3\\
\vdots&\vdots & \vdots&\ddots&\vdots& \vdots\\
m-2 & m-3 & m-4 &\cdots&0 & m-1\\
m-1 & m-2 & m-3 &\cdots &1& 0\\
\end{array}\right)_{m\times m}
$$
This is a right circulant matrix, in which each row is obtained from the preceding row by a cyclic shift of the entries to the right. 

Now, we apply the second substitution $x^{m-1}$ with $-\sum^{m-2}_{j=0}x^{j}$ and retaining $x^{j}$ fixed for $j\neq m-1$ to obtain the matrix 
$$
R_{m}=\begin{pmatrix}
1 & 0 & 0 & \cdots & 0 & -1 \\
0 & 1 & 0 & \cdots & 0 & -1 \\
\vdots & \cdots & \ddots & \cdots & 0 & -1 \\
0 & 0 & 0 & \cdots & 1 & -1 \\
0 & 0 & 0 & \cdots & 0 & 0
\end{pmatrix}_{m\times m}.
$$

\begin{defn}[Degenerate Bernoulli Matrix] 
Given two integers $m>0$ and $k\geq 0$, we define the degenerate Bernoulli matrix as the $$\mathcal{B}_{m}=-mR_{m}C_{m}.$$
\end{defn}

Note the $m$ factor which we skipped at the beginning for simplifying the discussion. The degenerate Bernoulli number is given by 
$$
\tilde{\beta}_{k}(m)=\frac{1}{k!}\mathcal{B}_{m}^{k}e_{1}, 
$$
where $e_{1}=(1,0,\cdots,0)^{T}$.


One can further simplify the multiplications by the following lemma. 
\begin{lem} \label{rcr}
$R_{m}C_{m}R_{m} = R_{m}C_{m}$.
\end{lem}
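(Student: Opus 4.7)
The plan is to reduce the matrix identity to a rank-one statement and then verify that single statement by a direct row-sum computation. First I would rewrite the desired identity as $R_m C_m(R_m-I)=0$. Inspecting $R_m-I$, every column is zero except the last, which equals $-(1,1,\dots,1)^T$. Consequently the whole product $R_m C_m(R_m-I)$ vanishes if and only if $R_m C_m \mathbf{1}=0$, where $\mathbf{1}=(1,\dots,1)^T$.

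Next I would exploit the circulant structure of $C_m$. Every row of $C_m$ is a cyclic permutation of $(0,1,2,\dots,m-1)$, so every row sum equals $\tfrac{m(m-1)}{2}$. Therefore $C_m\mathbf{1}=\tfrac{m(m-1)}{2}\mathbf{1}$. Finally, a direct reading of $R_m$ gives $R_m\mathbf{1}=0$: for $i<m-1$ the $i$th entry is $1+(-1)=0$, and the last row of $R_m$ is entirely zero. Chaining these, $R_m C_m\mathbf{1}=\tfrac{m(m-1)}{2}R_m\mathbf{1}=0$, which is what we needed.

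Conceptually this is just the polynomial translation of the Deferred Substitution Rule \ref{deferred_subs_rule_1}: $C_m$ encodes multiplication by $x\Psi_m'(x)$ followed by reduction modulo $x^m-1$, while $R_m$ encodes the further reduction modulo $\Psi_m(x)$. Since $R_m v$ and $v$ represent polynomials that already agree modulo $\Psi_m(x)$ (they differ by $v_{m-1}\Psi_m(x)$), and multiplication by $x\Psi_m'(x)$ preserves that congruence, the two vectors $R_m C_m R_m v$ and $R_m C_m v$ are equal. If desired, I would include this conceptual remark alongside the matrix-level verification.

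The main obstacle, such as it is, is simply spotting the right reformulation $R_m C_m(R_m-I)=0$; once one notices that $R_m-I$ has rank one with its nonzero column equal to $-\mathbf{1}$, the rest is a two-line circulant row-sum calculation with no hidden subtlety.
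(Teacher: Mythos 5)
Your proof is correct, and it takes a cleaner route than the paper's. The paper writes $R_m$ and $C_m$ in block form, multiplies out both $R_mC_m$ and $R_mC_mR_m$ explicitly, and checks that the last columns agree via the identity $-A\mathbf{1}+\mathbf{1}Y\mathbf{1}=X$, where $A$, $X$, $Y$ are the blocks of $C_m$ and $\mathbf{1}$ is the all-ones column. Your reformulation $R_mC_m(R_m-I)=0$ isolates the same arithmetic content --- the constancy of the row sums of the circulant --- but packages it as the single eigenvector relation $C_m\mathbf{1}=\tfrac{m(m-1)}{2}\mathbf{1}$ together with $R_m\mathbf{1}=0$, avoiding the block computation entirely. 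What your version buys, beyond brevity, is generality: the argument shows $R_mXR_m=R_mX$ for \emph{any} matrix $X$ having $\mathbf{1}$ as a right eigenvector (equivalently, constant row sums), which applies verbatim to the matrix $\tilde{C}_m$ used later for the negative-index recursion. Your closing conceptual remark (that $v$ and $R_mv$ differ by $v_{m-1}\Psi_m(x)$, so the two sides agree modulo $\Psi_m(x)$) is the right intuition, though making it rigorous requires the small extra observation that multiplication modulo $x^m-1$ sends multiples of $\Psi_m(x)$ to multiples of $\Psi_m(x)$ because $\Psi_m(x)\mid x^m-1$; since you offer it only as a supplement to the airtight matrix verification, there is no gap.
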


\begin{proof}

We write the matrices $R_{m}$ and $C_{m}$ in block matrix form 
$$
R_{m}=\left(
\begin{array}{c|c}
I_{(m-1)\times (m-1)} & -\mathbbmtt{1}_{(m-1)\times 1} \\ \hline
O_{1\times (m-1)} & 0
\end{array}\right)
$$
and 
$$C_{m}=\left(
\begin{array}{c|c}
A& X \\ \hline
Y & 0
\end{array}\right)$$
where $\mathbbmtt{1}_{(m-1)\times 1} $ is a column matrix with all entries being 1, $Y = \begin{pmatrix}
m-1&m-2&\ldots&1
\end{pmatrix},$ $X =(1,2,\cdots,m-1)^{T}, $ and
$$ A = \begin{pmatrix}
0 & m-1 & m-2&\cdots&2\\
1 & 0 & m-1 &\cdots&3\\
2 & 1 &0 &\cdots &4\\
\vdots&\vdots & \vdots&\ddots&\vdots\\
m-2 & m-3 & m-4 &\cdots&0 \\ 
\end{pmatrix},
$$ 
By a direct computation we have 
$$R_{m}C_{m} = \left(
\begin{array}{c|c}
I & -\mathbbmtt{1} \\ \hline
O & 0
\end{array}\right)
\left(
\begin{array}{c|c}
A& X \\ \hline
Y & 0
\end{array}\right)
= \left(
\begin{array}{c|c}
A-\mathbbmtt{1}Y& X \\ \hline
O & 0
\end{array}\right)$$
and 
$$R_{m}C_{m}R_{m} = \left(
\begin{array}{c|c}
A-\mathbbmtt{1}Y& X \\ \hline
O & 0
\end{array}\right)
\left(
\begin{array}{c|c}
I & -\mathbbmtt{1} \\ \hline
O & 0
\end{array}\right)
=\left(
\begin{array}{c|c}
A-\mathbbmtt{1}Y& -A\mathbbmtt{1}+\mathbbmtt{1}Y\mathbbmtt{1} \\ \hline
O & 0
\end{array}\right).
$$
It is easy to see that $$\mathbbmtt{1}Y\mathbbmtt{1}=\frac{m(m-1)}{2}\mathbbmtt{1} \textnormal{ and } A\mathbbmtt{1}=\frac{m(m-1)}{2}\mathbbmtt{1}-X.$$
Thus
$-A\mathbbmtt{1}+\mathbbmtt{1}Y\mathbbmtt{1} = X$ and hence
$R_{m}C_{m}R_{m}=R_{m}C_{m}.$
\end{proof}

It is easy to see that $R_{m}^{2}=R_{m}$. Therefore, by the above lemma we can easily deduce $(R_{m}C_{m}R_{m})^{k}=R_{m}C_{m}^{k}$. Indeed, inductively arguing on $k$, we have by Lemma \ref{rcr}
\begin{eqnarray}
\nonumber (R_{m}C_{m})^{k+1}&=&(R_{m}C_{m})(R_{m}C_{m})^k = (R_{m}C_{m})R_{m}C_{m}^{k}\\
\nonumber &=& (R_{m}C_{m}R_{m})C^{k+1}_{m}=R_{m}C^{k+1}_{m}.
\end{eqnarray}

By the above discussion we can see the following result. 
\begin{thm}
The $k^{th}$ degenerate Bernoulli number is given by the $t^{k}$ term in $R_{m}e^{-tmC_{m}}$, where we use the notation of exponentiation of a matrix $e^{tB}=\sum_{t=0}^{\infty}\frac{t^{k}}{k!}B^{k}$.
\end{thm}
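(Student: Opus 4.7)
The plan is to expand the matrix exponential as a formal power series in $t$, extract the $t^k$ coefficient, and then use Lemma \ref{rcr} (specifically its corollary $(R_m C_m)^k = R_m C_m^k$ for $k\geq 1$) to rewrite this coefficient in terms of powers of $\mathcal{B}_m = -m R_m C_m$. The identification with $\tilde{\beta}_k(m)$ then follows from the definition of the Degenerate Bernoulli Matrix given just before the theorem.

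First, I would write
\begin{equation*}
R_m e^{-tmC_m} \;=\; R_m \sum_{k=0}^{\infty} \frac{(-tm)^k}{k!}\, C_m^k \;=\; \sum_{k=0}^{\infty} \frac{(-m)^k}{k!}\, R_m C_m^k \, t^k,
\end{equation*}
so that by inspection the $t^k$ coefficient equals $\dfrac{(-m)^k}{k!}\, R_m C_m^k$. Next, I would invoke the consequence of Lemma \ref{rcr} proved in the paragraph immediately following it, namely $(R_m C_m)^k = R_m C_m^k$ for every $k \geq 1$, to rewrite the $t^k$ coefficient as
\begin{equation*}
\frac{(-m)^k}{k!}\, (R_m C_m)^k \;=\; \frac{(-m R_m C_m)^k}{k!} \;=\; \frac{\mathcal{B}_m^k}{k!}.
\end{equation*}
Applying this matrix to $e_1$ and using the defining formula $\tilde{\beta}_k(m) = \frac{1}{k!}\mathcal{B}_m^k e_1$ recorded just above the theorem yields the claim. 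The case $k=0$ is handled separately and trivially, since $R_m e_1 = e_1$ represents the constant polynomial $1 = \tilde{\beta}_0(m,x)$.

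There is essentially no obstacle here — all the real work was done in Lemma \ref{rcr} and in setting up the matrices $R_m$ and $C_m$ via the Deferred Substitution Rule. The only point that requires a brief comment is the restriction $k \geq 1$ in the identity $(R_m C_m)^k = R_m C_m^k$: one must check the $k=0$ term by hand (as above), because for $k = 0$ the left side is $I$ while the right side is $R_m$. Apart from that bookkeeping, the proof is a direct termwise comparison of the two power series.
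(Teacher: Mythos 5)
Your proposal is correct and follows essentially the same route as the paper, which proves the theorem implicitly through the discussion preceding it: expand $e^{-tmC_m}$ termwise, use $(R_mC_m)^k=R_mC_m^k$ (the consequence of Lemma \ref{rcr} together with $R_m^2=R_m$), and identify the coefficient with $\mathcal{B}_m^k/k!$ via the definition $\tilde{\beta}_k(m)=\frac{1}{k!}\mathcal{B}_m^k e_1$. Your explicit treatment of the $k=0$ term (where $(R_mC_m)^0=I\neq R_m$ but both agree on $e_1$) is a small point the paper glosses over.
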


A special feature of the circulant matrix is that they are diagonalizable  with the discrete fourier transform $F$ \cite{davis}. We can write 
$$
C_{m}=F^{-1}D_{m}F
$$
so that $R_{m}C^{k}_{m}=R_{m}F^{-1}D^{k}_{m}F$. 
The exponential form is given by $R_{m}F^{-1}_{m}e^{-tmD_{m}}F$. 
From the diagonalization of the circulant matrices we can easily see that the eigenvalues $\lambda_{j}$ of $C_{m}$ are given by 
$$
\lambda_{j}=\sum_{i=0}^{m-2} i\omega_{m}^{ij},
$$
where $\omega_{m}\neq 1$ and $\omega_{m}^{m}=1$. 

The matrix representation for the negative index is simpler as the recursive formula is simpler 
$$
f_{-k-1}^{(m)}(x)=(1-x)f_{-k}^{(m)}(x) \textnormal{ rem } \Psi_{m}(x).
$$
The representation is again a right circulant matrix given by $R_{m}\tilde{C}_{m}$, where  
$$
\tilde{C}_{m}=\begin{pmatrix}
1 & -1 & 0 & \cdots & 0 & 0 \\
0 & 1 & -1 & \cdots & 0 & 0 \\
\vdots &  & \ddots & \cdots &  & \vdots \\
0 & 0 & 0 & \cdots & 1 & -1 \\
-1 & 0 & 0 & \cdots & 0 & 1 \\
\end{pmatrix}_{m\times m},
$$
whose eigenvalues are 
$$
\tilde{\lambda}_{j}=\omega_{m}^{j^{2}}-\omega_{m}^{j(j+1)},
$$
where $\omega_{m}\neq 1$ satisfying $\omega_{m}^{m}=1$.

\section{Palindrome and Anti-Palindrome Property}

In this section, we show that the degenerate Bernoulli polynomial $\tilde{\beta}_{k}(m,x)$ has palindromic properties if $m$ is odd. A consequence of this is that the degenerate Bernoulli numbers vanish for certain $k$ depending on $m$.

\begin{defn}[Palindromic Polynomial \cite{robins,Zeilberger}]
Let $p(x)=\alpha_{r}x^{r}+\cdots+\alpha_{s}x^{s}$ be a non-zero polynomial, with real coefficents and $\alpha_{r},\alpha_{s}\neq 0$. The darga of $p$ is $\textnormal{dg}(p)=r+s$. We say that a polynomial $p$ of darga $n$ is palindromic if $x^{n}p(1/x)=p(x)$ i.e., $\alpha_{j}=\alpha_{n-j}$ for all $j$. We say that the polynomial $p$ is anti-palindromic if $x^{n}p(-1/x)=p(x)$ i.e., $\alpha_{j}=-\alpha_{n-j}$ for all $j$. Collection of palindromic and anti-palindromic polynomials of darga $n$ are denoted by $\mathscr{P}^{(n)}$ and $\mathscr{A}^{(n)}$ respectively.
\end{defn}

Note that, if the zero polynomial is included, we could consider both $\mathscr{P}^{(n)}$ and $\mathscr{A}^{(n)}$ as vector spaces over $\mathbb{R}$. So, for convenience we assume that $0$ polynomial is of any darga. It is easy to see that the set   
$$
\{x^{j}+x^{n-j}: j=0,\cdots,n\}
$$
is a basis set for $\mathscr{P}^{(n)}$ and 
$$
\{x^{j}-x^{n-j}: j=0,\cdots,n\}
$$
for $\mathscr{A}^{(n)}$. We prove the following result. 
 
\begin{thm}\label{thm_palin} 
Let $m$ be an odd integer and $d$ an integer. Then the following relation holds
$$
 \tilde{\beta}_{k}(md+2,x)\in 
 \begin{cases} 
 \mathscr{P}^{(m-2)} & \textnormal{ for } $d$ \textnormal{ even }\\
 \mathscr{A}^{(m-2)} & \textnormal{ for } $d$ \textnormal{ odd }.
 \end{cases}
$$
Thus, $\tilde{\beta}_{k}(md+2)=0$ for $d$ odd. 
\end{thm}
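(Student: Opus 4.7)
The plan is to work inside the ring $R=\mathbb{Q}[x]/\Psi_m(x)$, where $(x-1)\Psi_m(x)=x^m-1=0$ gives $x^m=1$, and $\Psi_m(1)=m\neq 0$ makes $(x-1)$ a unit. The lemma just above identifies the class of $\tfrac{1}{m}x\Psi'_m(x)$ with $(x-1)^{-1}$ in $R$, so $f_k^{(m)}(x)$ is the class of $(x-1)^{-k}$ for every $k\in\mathbb{Z}$. The key tool is the ring automorphism $\sigma:R\to R$ determined by $\sigma(x)=x^{-1}=x^{m-1}$; this is well defined because $\Psi_m$ is itself palindromic, so the ideal it generates in $\mathbb{Q}[x,x^{-1}]$ is stable under $x\mapsto x^{-1}$. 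A direct computation gives $\sigma\bigl((x-1)^{-1}\bigr)=(x^{-1}-1)^{-1}=-x(x-1)^{-1}$, and by multiplicativity $\sigma\bigl(f_k^{(m)}\bigr)=(-x)^k f_k^{(m)}=(-1)^k x^k f_k^{(m)}$ in $R$.

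The next step converts this $R$-identity into the reciprocal-polynomial relation. Let $T(p)(x):=x^{m-2}p(1/x)$ be the reciprocal operator on polynomials of degree at most $m-2$. For $p(x)=\sum_{i=0}^{m-2}a_i x^i$ one has $T(p)(x)=\sum_i a_i x^{m-2-i}$, and since $x^{m-2}\cdot x^{-i}=x^{m-2-i}$ in $R$ for every $0\le i\le m-2$, this is the same as $x^{m-2}\sigma(p)$ read inside $R$. Multiplying $\sigma(f_k^{(m)})=(-1)^k x^k f_k^{(m)}$ by $x^{m-2}$ therefore yields $T\bigl(f_k^{(m)}\bigr)\equiv(-1)^k x^{m+k-2} f_k^{(m)}\pmod{\Psi_m(x)}$. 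Specialising to $k=md+2$, the exponent $m+k-2=m(d+1)$ is a multiple of $m$, so $x^{m+k-2}=(x^m)^{d+1}=1$ in $R$; and because $m$ is odd, $(-1)^{md+2}=(-1)^d$. Hence $T\bigl(f_{md+2}^{(m)}\bigr)\equiv(-1)^d f_{md+2}^{(m)}\pmod{\Psi_m(x)}$, and since both sides are polynomials of degree strictly less than $\deg\Psi_m$, the congruence already holds as an equality of polynomials. This places $f_{md+2}^{(m)}$, and hence its scalar multiple $\tilde\beta_{md+2}(m,x)$, in $\mathscr{P}^{(m-2)}$ when $d$ is even and in $\mathscr{A}^{(m-2)}$ when $d$ is odd. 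The final assertion is then immediate: an anti-palindromic polynomial of darga $m-2$ has coefficients satisfying $\alpha_j=-\alpha_{m-2-j}$, so $p(1)=\sum_j\alpha_j=-\sum_j\alpha_j=0$, giving $\tilde\beta_{md+2}(m)=\tilde\beta_{md+2}(m,1)=0$ for $d$ odd.

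The main obstacle I anticipate is precisely the bookkeeping in the second step: one must recognise that the honest reciprocal operator $T$, restricted to polynomials of degree at most $m-2$, is literally multiplication by $x^{m-2}$ followed by the automorphism $\sigma$ once read inside $R$, and then use the sharp degree bound to promote the equality modulo $\Psi_m$ into an equality of polynomials. Everything else is a careful accounting of signs that uses only the two facts that $m$ is odd and $x^m=1$ in $R$.
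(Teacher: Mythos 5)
Your argument is correct, and it takes a genuinely different route from the paper. The paper proceeds by induction on $d$: it introduces the operator $M_{m}(p)=\left((1-x)^{m}p(x)\right)\textnormal{ rem }\Psi_{m}(x)$, proves in a separate lemma (by expanding $x^{j}(1-x)^{m}\pm x^{m-2-j}(1-x)^{m}$ on the basis $\{x^{j}\pm x^{m-2-j}\}$ with the substitution rule and the identity $\binom{m}{k}=\binom{m}{m-k}$) that $M_{m}$ carries $\mathscr{P}^{(m-2)}$ isomorphically onto $\mathscr{A}^{(m-2)}$ when $m$ is odd, and then anchors the induction with a base case $f_{2}^{(m)}\in\mathscr{P}^{(m-2)}$ verified by an explicit row computation in the circulant-matrix representation (the paper also reduces mod $p$ and invokes periodicity, which your argument shows is not actually needed). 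You instead exploit the single ring automorphism $\sigma:x\mapsto x^{-1}$ of $\mathbb{Q}[x]/(\Psi_{m})$, well defined because $\Psi_{m}$ is itself palindromic, and compute the eigenvalue of $f_{k}^{(m)}=(x-1)^{-k}$ under the reciprocal operator $T(p)=x^{m-2}p(1/x)=x^{m-2}\sigma(p)$, getting $T(f_{k}^{(m)})=(-1)^{k}x^{m+k-2}f_{k}^{(m)}$ and specialising to $k=md+2$. This is shorter, avoids both the base case and the inductive lemma, stays in characteristic zero, handles negative $d$ on the same footing, and makes the dichotomy transparent as the sign $(-1)^{d}$ of an eigenvalue; what it does not produce is the explicit operator $M_{m}$ and its matrix, which the paper's route ties back to the circulant-matrix machinery of Section \ref{sec_matrix}. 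Two small points to keep straight: the theorem's statement should be read as $\tilde{\beta}_{md+2}(m,x)$ (as in the introduction and the corollary), which is how you read it; and the anti-palindromic case should be matched to the paper's coefficient condition $\alpha_{j}=-\alpha_{n-j}$, i.e.\ $T(p)=-p$, which is again what you use and what the vanishing $\tilde{\beta}_{md+2}(m,1)=0$ requires.
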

The proof of this result relies on a multiplication modulo $\Psi_{m}(x)$ operator $M_{m}$ on polynomials of degree $(m-2)$ by 
$$
M_{m}(p(x))= \left((1-x)^{m}p(x)\right) \textnormal{ rem }\Psi_{m}(x).
$$

\begin{lem} \label{lem_palin}
For $m$ an odd integer, the linear map
$$
M_{m}:\mathscr{P}^{(m-2)}\rightarrow \mathscr{A}^{(m-2)}
$$
is an isomorphism.  

\end{lem}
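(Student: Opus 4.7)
The strategy is to verify three properties of $M_m$: it is linear, it carries $\mathscr{P}^{(m-2)}$ into $\mathscr{A}^{(m-2)}$, and it is injective. Linearity is immediate from the definition. For the dimension comparison needed at the end, note that since $m$ is odd the number $m-2$ is odd, so the displayed bases give $\dim\mathscr{P}^{(m-2)} = \dim\mathscr{A}^{(m-2)} = (m-1)/2$; an injective linear map between finite-dimensional vector spaces of equal dimension is automatically an isomorphism.

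Injectivity is quick. If $M_m(p) = 0$ then $\Psi_m(x) \mid (1-x)^m p(x)$ in $\mathbb{Q}[x]$. Since $\Psi_m(1) = m \neq 0$, the polynomials $\Psi_m(x)$ and $1-x$ are coprime, so $\Psi_m(x) \mid p(x)$; the bound $\deg p \le m-2 < m-1 = \deg\Psi_m$ then forces $p = 0$.

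The containment $M_m(\mathscr{P}^{(m-2)}) \subseteq \mathscr{A}^{(m-2)}$ is the heart of the argument. Let $p \in \mathscr{P}^{(m-2)}$, so $x^{m-2} p(1/x) = p(x)$, and write $(1-x)^m p(x) = \Psi_m(x) g(x) + r(x)$ with $\deg r \le m-2$. Using $m$ odd one has $x^m(1-1/x)^m = (x-1)^m = -(1-x)^m$, which together with the palindrome identity $x^{m-1}\Psi_m(1/x) = \Psi_m(x)$ shows that substituting $x \mapsto 1/x$ in the division identity and multiplying through by $x^{2m-2}$ produces
$$
-\Psi_m(x) g(x) - r(x) \;=\; \Psi_m(x)\bigl[x^{m-1} g(1/x)\bigr] + x^{2m-2} r(1/x).
$$
Setting $r^{\ast}(x) := x^{m-2} r(1/x)$ (a polynomial of degree $\le m-2$) and using $x^m = 1 + (x-1)\Psi_m(x)$ to rewrite $x^{2m-2} r(1/x) = x^m r^{\ast}(x) = r^{\ast}(x) + (x-1)\Psi_m(x) r^{\ast}(x)$ converts the identity into
$$
\Psi_m(x)\bigl[g(x) + x^{m-1} g(1/x) + (x-1) r^{\ast}(x)\bigr] \;=\; -r(x) - r^{\ast}(x).
$$
The right-hand side has degree $\le m-2 < \deg\Psi_m$, so both sides must vanish, giving $r(x) = -r^{\ast}(x)$, i.e.\ $r \in \mathscr{A}^{(m-2)}$.

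The main obstacle is precisely this last step: one must carefully track how the two palindromic symmetries (of $(1-x)^m p(x)$ of darga $2m-2$ and of $\Psi_m(x)$ of darga $m-1$) interact with polynomial division. The key trick is to use $x^m - 1 = (x-1)\Psi_m(x)$ to absorb the high-degree piece $x^{2m-2} r(1/x)$ into a multiple of $\Psi_m(x)$ plus a low-degree residue, after which a direct degree comparison against $\Psi_m$ forces the anti-palindromic symmetry on $r$ with no room to spare.
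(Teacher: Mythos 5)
Your proof is correct, and it takes a genuinely different route from the paper's. The paper proves the containment $M_m(\mathscr{P}^{(m-2)})\subseteq\mathscr{A}^{(m-2)}$ by brute force on the basis $\{x^j+x^{m-2-j}\}$: it expands $x^j(1-x)^m$ and $x^{m-2-j}(1-x)^m$ modulo $\Psi_m(x)$ via the binomial theorem and the substitution rule, then pairs terms using $\binom{m}{k}=\binom{m}{m-k}$ and $(-1)^k=-(-1)^{m-k}$ for odd $m$. You instead run a single global symmetry argument: apply $x\mapsto 1/x$ to the division identity $(1-x)^mp(x)=\Psi_m(x)g(x)+r(x)$, exploit the self-reciprocity $x^{m-1}\Psi_m(1/x)=\Psi_m(x)$ and the sign flip $x^m(1-1/x)^m=-(1-x)^m$ for odd $m$, absorb $x^{2m-2}r(1/x)$ into a multiple of $\Psi_m$ via $x^m=1+(x-1)\Psi_m(x)$, and finish with a degree comparison to get $r=-r^\ast$. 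This is cleaner and less error-prone than the coefficient bookkeeping (note the degree bound $\deg g\le m-1$ you implicitly use, so that $x^{m-1}g(1/x)$ is a polynomial, is correct since $\deg\bigl((1-x)^mp\bigr)\le 2m-2$). Your injectivity argument via $\gcd(\Psi_m,1-x)=1$ is also tighter than the paper's root-sharing argument, and you are more careful than the paper in two respects: you supply the equal-dimension count $\dim\mathscr{P}^{(m-2)}=\dim\mathscr{A}^{(m-2)}=(m-1)/2$ (valid because $m-2$ is odd) needed to upgrade injectivity to an isomorphism, a step the paper leaves implicit; and your conclusion $x^{m-2}r(1/x)=-r(x)$ matches the coefficient condition $\alpha_j=-\alpha_{n-j}$ that the paper actually uses for anti-palindromes (its displayed functional equation $x^np(-1/x)=p(x)$ appears to be a typo). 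What the paper's computation buys in exchange is the explicit form of the image of each basis element, which it reuses in the proof of Theorem \ref{thm_palin}; your argument does not produce that data, but it is not needed for the lemma itself.
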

\begin{proof}
It is easy to show that $M_{m}$ is a linear operator modulo $\Psi_{m}(x)$. Thus it suffices to prove that on a basis element $x^{j}+x^{m-2-j}$, for $0\leq j \leq m-2$, the mapping $M_{m}$ yields an anti-palindromic polynomial of darga $(m-2)$.

By the substitution rule (\ref{subs_rule_1}) and a rearrangement of terms the following equations hold modulo $\Psi_{m}(x)$:
\begin{eqnarray}
\nonumber x^{j}(1-x)^{m}&=& \sum^{m-j-2}_{\alpha=0}(-1)^{\alpha}\begin{pmatrix}m\\ \alpha\end{pmatrix}x^{j+\alpha}\\
\nonumber & &+(-1)^{m-j-1}\left(\begin{pmatrix}m\\ m-j-1\end{pmatrix}(-1-x-\cdots-x^{m-2})\right)\\
\nonumber & &+ \sum_{\beta=0}^{j+1} (-1)^{m-j+\beta}\begin{pmatrix}m\\ m-j+\beta\end{pmatrix}x^{\beta}
\end{eqnarray}

and 

\begin{eqnarray}
\nonumber x^{m-2-j}(1-x)^{m}&=& \sum^{j}_{\beta=0}(-1)^{j-\beta}\begin{pmatrix}m\\ j-\beta\end{pmatrix}x^{m-2-\beta}\\
\nonumber & &+(-1)^{j+1}\left(\begin{pmatrix}m\\ m-j-1\end{pmatrix}(-1-x-\cdots-x^{m-2})\right)\\
\nonumber & &+ \sum_{\alpha=0}^{m-j-2} (-1)^{m-\alpha}\begin{pmatrix}m\\ m-\alpha\end{pmatrix}x^{m-j-\alpha-2}
\end{eqnarray}
It is given that $m$ is odd so we have $(-1)^{k}=-(-1)^{m-k}$. By the well-known combinatorial identity  
$$
\begin{pmatrix}
m\\k
\end{pmatrix}=\begin{pmatrix}
m\\m-k
\end{pmatrix}
$$
we have $x^{j}(1-x)^{m}\pm x^{m-j-2}(1-x)^{m}$ modulo $\Psi_{m}(x)$ equal to 
\begin{eqnarray}
\nonumber & &\sum_{\alpha=0}^{m-j-2}(-1)^{\alpha}\begin{pmatrix}
m\\ \alpha 
\end{pmatrix}\left(x^{j+\alpha}\mp x^{m-j-s-2}\right)+\sum_{\beta=0}^{j-1}(-1)^{j-\beta} \begin{pmatrix}
m\\ j-\beta
\end{pmatrix}(x^{m-2-\beta}\mp x^{\beta})\\
\nonumber & & +2\delta(-1)^{j}\begin{pmatrix}
m\\m-j-1
\end{pmatrix}(1+x+\cdots+x^{m-2}),
\end{eqnarray}
where $\delta=0$ if $+$ is chosen; otherwise $\delta=1$. Thus palindromic is transformed to anti-palindromic and vice-versa.

The map $M_{m}$ is one-one. To see this suppose $(1-x)^{m}p(x)\textnormal{ rem }\Psi_{m}(x)=0$ for some $p(x)\in\mathscr{P}^{(m-2)}$. Then, we can write $(1-x)^{m}p(x)=\Psi_{m}(x)q(x)$ for some polynomial $q(x)$. Since $\Psi_{m}(1)\neq 0$, this means all the roots of $\Psi_{m}(x)$ have to be shared by a polynomial of degree atmost $(m-2)$ degree. Therefore, this is possible only if $p(x)=0$.

\end{proof}

\begin{proof}[Proof of Theorem \ref{thm_palin}]
It is enough if the result is proved for $\mod p$ case for an arbitrary prime $p$, of course, relatively coprime to $m$. Further, by the periodicity of $f_{k}^{(m)}(x)$ we have to just show for the case multiplications by $(1-x)^{m}$ modulo $\Psi_{m}(x)$. By Lemma \ref{lem_palin}, 
all that remains to show that is $f_{2}^{(m)}(x)\in \mathscr{P}^{(m-2)}$ i.e., 
$
(x\Psi'_{m}(x))^{2} \textnormal{ rem } \Psi_{m}(x).
$
By the matrix representation... it is sufficient to look at the $j^{th}$ row and $(m-2-j)^{th}$ row for $j=0,\cdots,(m-2)$. The $j^{th}$ row in the matrix is given by 
$$
(j,\cdots,1,0,m-1,\cdots,(2j+2),(2j+1),\cdots,(j-1))
$$
and the $(m-2-j)^{th}$ row is given by 
$$
((m-2j),\cdots,(m-2j-2),(m-2j-3),\cdots,0,(m-1),\cdots,(m-3-j)).
$$
The difference of both the above rows give us 
$$
(\underbrace{a,\cdots,a}_{(j+1) \textnormal{ terms}},\underbrace{a-m,\cdots,a-m}_{(m-2j-2) \textnormal{ terms }},\underbrace{a,\cdots,a}_{(j+1) \textnormal{ terms }})
$$
where $a=(m-2j-2)$.
Taking dot product of the above vector with the vector $(0,1,2,\cdots,m-1)$ we get the expression
$$
\frac{(m-2j-2)m(m-1)}{2}-m\left(\frac{(m-2-j)(m-1-j)}{2}-\frac{j(j+1)}{2}\right)=0
$$
Hence the $j^{th}$ term is equal to $(m-2-j)^{th}$ term for $j=0,\cdots,(m-2)$ in the polynomial which means it is a palindrome of darga $(m-2)$. 
\end{proof}

Thus we have given an algebraic proof for the following result. Analytic proof was given by Young \cite{young}.

\begin{cor}
$$
\tilde{\beta}_{md+2}(m)=0 \textnormal{ for } m,d \textnormal{ odd. }
$$
\end{cor}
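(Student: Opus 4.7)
The plan is to read this off directly from the palindromic structure supplied by Theorem~\ref{thm_palin}. Under the hypotheses $m$ odd and $d$ odd, that theorem places $\tilde{\beta}_{md+2}(m,x)$ into $\mathscr{A}^{(m-2)}$, so if we write
$\tilde{\beta}_{md+2}(m,x)=\sum_{j=0}^{m-2}\alpha_j x^j$
the coefficients satisfy the reflection identity $\alpha_j=-\alpha_{m-2-j}$ for every $j$. The task then reduces to evaluating this polynomial at $x=1$ and recalling that $\tilde{\beta}_{md+2}(m)=\tilde{\beta}_{md+2}(m,1)$ by the convention introduced immediately after (\ref{fkm}).

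The only substantive step is the easy observation that every anti-palindromic polynomial vanishes at $1$. Explicitly,
$$\tilde{\beta}_{md+2}(m,1)=\sum_{j=0}^{m-2}\alpha_j=-\sum_{j=0}^{m-2}\alpha_{m-2-j}=-\sum_{i=0}^{m-2}\alpha_i=-\tilde{\beta}_{md+2}(m,1),$$
after reindexing $i=m-2-j$ in the middle sum. Hence $\tilde{\beta}_{md+2}(m,1)=0$, which is precisely the stated assertion.

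No genuine obstacle is anticipated; the corollary is essentially a one-line consequence of Theorem~\ref{thm_palin} combined with the parity-free pairing argument above. It is worth emphasising that the pairing $j\leftrightarrow m-2-j$ forces $\sum_j\alpha_j=-\sum_j\alpha_j$ regardless of the parity of the darga $m-2$, since whenever the pairing has a fixed point that coefficient is itself forced to be zero by the anti-palindrome condition. All the hard work has already been absorbed into Theorem~\ref{thm_palin}, whose proof carries the burden of showing that the multiplication-by-$(1-x)^m$ operator exchanges $\mathscr{P}^{(m-2)}$ and $\mathscr{A}^{(m-2)}$ and of identifying the initial polynomial $f_2^{(m)}(x)$ as palindromic; once that is in hand, the present corollary is immediate.
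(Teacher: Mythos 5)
Your proposal is correct and follows exactly the route the paper intends: the corollary is read off from Theorem~\ref{thm_palin} by evaluating the anti-palindromic polynomial $\tilde{\beta}_{md+2}(m,x)$ at $x=1$, where the pairing $\alpha_j=-\alpha_{m-2-j}$ forces the coefficient sum to vanish. The paper leaves this evaluation step implicit (the vanishing claim is already folded into the statement of Theorem~\ref{thm_palin}), and your explicit reindexing argument supplies precisely that omitted one-liner.
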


\section{Sums of Products}

Inspired by the formula (\ref{W1}) we look for a sum of products formula for the degenerate Bernoulli numbers. For any real number $r,$ the falling factorial defined as 
$(r)_k=r(r-1)\cdots(r-k+1)$ and  generalized falling factorial defined as $(r|\lambda)_k=r(r-\lambda)\cdots(r-(k-1)\lambda),$ for $k\in\mathbb{N}.$
For a positive integer $a,$  we denote $\sigma_k(\lambda,a)=\displaystyle\sum_{i=0}^a(i|\lambda)_k.$ We note that $$\lambda^k\sigma_k(1/\lambda,a)=\displaystyle\sum_{i=0}^a(\lambda i|1)_k=\sum_{i=0}^a(\lambda i)_k.$$ Also note that $(-r)_k=(-1)^k\langle r\rangle_k,$ where $\langle r\rangle_k$ is a rising factorial and is defines as $r(r+1)\cdots(r+k-1).$  Further, we denote that $$\widetilde{\sigma}_k(\lambda,a) =  \lambda^k\sigma_k(1/\lambda,a).$$ \\

The following remark is useful.
\begin{rema}
\begin{enumerate}
    \item For a positive integer $a,$ the generating function expressed as below (see \cite{young}):
\begin{equation} \label{yr}
  \frac{(1+\lambda t)^{(a+1)\mu}-1}{(1+\lambda t)^{\mu}-1}=\sum_{k=0}^\infty \sigma_k(\lambda,a){t^k\over k!}  
\end{equation}

\item The following Cauchy product formulae is well-known for the exponential generating functions:
    $$
    \left(\sum_{k=0}^\infty a_k {x^k\over k!}\right)\left(\sum_{k=0}^\infty b_k {x^k\over k!}\right)=\left(\sum_{k=0}^\infty c_k {x^k\over k!}\right),
    $$
    where $c_k=\displaystyle\sum_{k_1+k_2=k}\binom{k}{k_1,k_2}a_{k_1}b_{k_2}.$ 
    \item For any non-zero rational number $r,$ the Taylor's series expansion about $x=1$ is given as 
\begin{equation}\label{sp5}
  x^r=\sum_{k=0}^\infty(-1)^k(r)_k\frac{(1-x)^k}{k!},  
\end{equation}
  \end{enumerate}
\end{rema}

We have the following corollary.
\begin{cor}
For positive integers $a,\ b,\ n_1$ and $n_2,$ the following hold:

 \[\Psi_a(x^{n_1})=\displaystyle\sum_{k=0}^\infty(-1)^k\widetilde{\sigma}_k(n,a-1){(1-x)^k\over k!}\]
and
\[x^{-bn_1}{\Psi_{a}(x^{n_2})} =\sum_{k=0}^\infty \binom{k}{k_1, k_2}(-1)^{k_2}\langle bn_1\rangle_{k_1}\widetilde{\sigma}_{k_2}(n_1,a-1){(1-x)^k\over k!} \]

\end{cor}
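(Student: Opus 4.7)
The plan is to derive both identities directly from the ingredients already assembled in the preceding remark: equation (\ref{yr}), the Taylor expansion (\ref{sp5}), the Cauchy product for exponential generating functions, and the reflection $(-r)_k = (-1)^k \langle r \rangle_k$. No new machinery is needed; everything is a matter of specializing parameters.

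For the first identity, I start from the algebraic factorization $\Psi_a(x^{n_1}) = (x^{a n_1} - 1)/(x^{n_1} - 1)$. To bring (\ref{yr}) into this shape, I apply it with the index $a$ shifted to $a-1$ (so that the numerator exponent $(a+1)\mu$ becomes $a\mu$) and with $\mu = n_1$. After the change of variable $x = 1 + \lambda t$, equivalently $t = (x-1)/\lambda$, the left side of (\ref{yr}) becomes $\Psi_a(x^{n_1})$. Specializing the free parameter to $\lambda = 1/n_1$, recognizing $n_1^k \sigma_k(1/n_1, a-1)$ as $\widetilde{\sigma}_k(n_1, a-1)$, and using $(x-1)^k = (-1)^k (1-x)^k$ then delivers the first expansion. (The ``$n$'' appearing in the statement is evidently a typo for $n_1$.)

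For the second identity, I first expand $x^{-bn_1}$ about $x=1$ via (\ref{sp5}) with $r = -bn_1$. The product of signs $(-1)^{k_1} \cdot (-bn_1)_{k_1}$ collapses to $\langle bn_1 \rangle_{k_1}$ by the rising/falling factorial reflection, yielding
$$
x^{-bn_1} = \sum_{k_1=0}^\infty \langle bn_1 \rangle_{k_1} \frac{(1-x)^{k_1}}{k_1!}.
$$
Multiplying this series by the expansion of $\Psi_a(x^{n_2})$ obtained from the first identity (with $n_1$ replaced by $n_2$, which again seems to be the intended form) and invoking the Cauchy product to reindex by $k = k_1 + k_2$ produces the stated formula, with the $(-1)^{k_2}$ factor arising from the first identity and the $\langle bn_1 \rangle_{k_1}$ factor from the Taylor expansion of $x^{-bn_1}$.

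The main difficulty is purely bookkeeping: aligning the free parameters $(a, \mu, \lambda)$ of (\ref{yr}) so that the exponents match $a n_1$ and $n_1$ on the numerator and denominator of $\Psi_a(x^{n_1})$, and carefully tracking the signs coming from $(x-1)^k = (-1)^k (1-x)^k$ and $(-r)_{k_1} = (-1)^{k_1} \langle r \rangle_{k_1}$ so that they combine correctly. No conceptual obstacle arises, and convergence is not an issue since everything is formal power series in $(1-x)$.
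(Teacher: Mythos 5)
Your proposal is correct and follows essentially the same route as the paper: substitute $x=1+\lambda t$ with $\lambda=1/n_1$ into the generating function (\ref{yr}) (shifted so the numerator exponent is $an_1$) for the first identity, then expand $x^{-bn_1}$ via (\ref{sp5}), use $(-r)_k=(-1)^k\langle r\rangle_k$, and take the Cauchy product for the second. Your version is merely more explicit about the index shift $a\mapsto a-1$ and the typographical slips ($n$ for $n_1$, and the missing inner sum over $k_1+k_2=k$) that the paper's terser proof glosses over.
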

\begin{proof}
First equation is immediate if we write $(1+\lambda t) = x$ and $\lambda = {1\over n}$ in the equation (\ref{yr}). 

By using Taylor series expansion (\ref{sp5}), we have
\begin{align*}
  x^{-bn_1}&=\sum_{k=0}^\infty(-1)^k(-bn_1)_k\frac{(1-x)^k}{k!}\\
  &=\sum_{k=0}^\infty\langle bn_1\rangle_k \frac{(1-x)^k}{k!}
\end{align*}
Then it is easy to see that the second equation holds using Cauchy product for exponential generating functions for $x^{-bn_1}$ and $\Psi_a(x^{n_2}).$ 
\end{proof}

Now, we have sums of product formulae
\begin{thm}\label{sp2}
Let $m\ge2$ and $n\ge3$ be two integers that are co-prime. Assume that $a,b,\alpha$ and $\beta$ are smallest possible positive integers such that $an-bm=1$ and $\alpha m-\beta n=1.$ For $k\in\mathbb{N},$ 
if $a>\beta$ then we have
\begin{multline*}
  \sum_{k_1+k_2=k} {\widetilde{\beta}_{k_1}(m)\over k_1!}{\widetilde{\beta}_{k_2}(n)\over k_2!} = n\sum_{k_1+k_2=k}\sum_{s_1+s_2=k_1} \frac{\left( -bm\right)_{s_1}}{s_1!}\frac{\widetilde{\sigma}_{s_2}(n,a-1)}{s_2!}\frac{\widetilde{\beta}_{k_2}(m)}{k_2!}\\
  -m\sum_{k_1+k_2=k}\sum_{s_1+s_2=k_1}\frac{\left( -bm\right)_{s_1}}{s_1!}\frac{\widetilde{\sigma}_{s_2}(m,b-1)}{s_2!}\frac{\widetilde{\beta}_{k_2}(n)}{k_2!}
\end{multline*}
else if $a<\beta$
\begin{multline*}
  \sum_{k_1+k_2=k} {\widetilde{\beta}_{k_1}(m)\over k_1!}{\widetilde{\beta}_{k_2}(n)\over k_2!} =m\sum_{k_1+k_2=k}\sum_{s_1+s_2=k_1}\frac{\left( -\beta n\right)_{s_1}}{s_1!}\frac{\widetilde{\sigma}_{s_2}(m,\alpha-1)}{s_2!}\frac{\widetilde{\beta}_{k_2}(n)}{k_2!}\\ - n\sum_{k_1+k_2=k}\sum_{s_1+s_2=k_1} \frac{\left(- \beta n\right)_{s_1}}{s_1!}\frac{\widetilde{\sigma}_{s_2}(n,\beta-1)}{s_2!}\frac{\widetilde{\beta}_{k_2}(m)}{k_2!}
\end{multline*}

\end{thm}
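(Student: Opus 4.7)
The plan is to derive this identity as a generating-function consequence of a Bezout-style polynomial decomposition of $mn/(\Psi_m(x)\Psi_n(x))$, with the arithmetic data $an - bm = 1$ encoding the Bezout combination.

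First I would establish the key polynomial identity
\[
\Psi_a(x^n)\Psi_n(x) - \Psi_b(x^m)\Psi_m(x) = x^{bm}.
\]
The factorization $\Psi_{rs}(x) = \Psi_r(x^s)\Psi_s(x)$ (which comes from telescoping $(1-x^{rs})/(1-x)$) turns the left-hand side into $\Psi_{an}(x) - \Psi_{bm}(x)$; using $an = bm+1$, this difference collapses to the single monomial $x^{bm}$. Dividing through by $\Psi_m(x)\Psi_n(x)$ and shifting the $x^{bm}$ factor yields the partial-fraction-like decomposition
\[
\frac{1}{\Psi_m(x)\Psi_n(x)} = \frac{x^{-bm}\Psi_a(x^n)}{\Psi_m(x)} - \frac{x^{-bm}\Psi_b(x^m)}{\Psi_n(x)}.
\]

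Next I would multiply by $mn$ and expand both sides as formal power series in $(1-x)$ about $x = 1$. The left-hand side equals $(m/\Psi_m)(n/\Psi_n)$, whose $(1-x)^k$ coefficient is $(-1)^k\sum_{k_1+k_2=k}\widetilde{\beta}_{k_1}(m)\widetilde{\beta}_{k_2}(n)/(k_1!\,k_2!)$ by the defining generating function and the Cauchy product. For each summand on the right I would expand three factors independently: $x^{-bm} = \sum_s \langle bm\rangle_s(1-x)^s/s!$ (binomial series), $\Psi_a(x^n) = \sum_s (-1)^s \widetilde{\sigma}_s(n,a-1)(1-x)^s/s!$ (from the corollary), and $m/\Psi_m(x) = \sum_k (-1)^k\widetilde{\beta}_k(m)(1-x)^k/k!$. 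A triple Cauchy product over the convolution index $s_1+s_2+k_2=k$ then reads off the $(1-x)^k$ coefficient of each summand.

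The main obstacle will be sign bookkeeping. The identity $(-1)^{s_1}\langle bm\rangle_{s_1} = (-bm)_{s_1}$ (converting rising factorial to falling factorial) is the linchpin: after factoring $(-1)^k$ out of the product of $(-1)^{s_2}$ (from $\Psi_a(x^n)$) and $(-1)^{k_2}$ (from $m/\Psi_m$) using $s_2+k_2 = k - s_1$, exactly one surplus $(-1)^{s_1}$ remains and is absorbed into $\langle bm\rangle_{s_1}$ to produce the $(-bm)_{s_1}$ displayed in the statement. Equating the $(-1)^k$-weighted coefficients on both sides gives the first case. For the case $a < \beta$, I would run the same argument with the mirror Bezout identity $\Psi_\alpha(x^m)\Psi_m(x) - \Psi_\beta(x^n)\Psi_n(x) = x^{\beta n}$ derived from $\alpha m - \beta n = 1$, swapping the roles of $(m,n)$ and $(a,b,\alpha,\beta)$; both decompositions are always valid polynomial identities, so the case distinction amounts to a choice of representation rather than a mathematical constraint.
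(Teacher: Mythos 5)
Your proposal is correct and follows essentially the same route as the paper: your ``Bezout-style'' identity $\Psi_a(x^n)\Psi_n(x)-\Psi_b(x^m)\Psi_m(x)=x^{bm}$ is just a rearrangement of the paper's manipulation of $\frac{x^{an-bm}-1}{\Psi_m(x)\Psi_n(x)}$, and both arrive at the same decomposition $\frac{1}{\Psi_m(x)\Psi_n(x)}=\frac{x^{-bm}\Psi_a(x^n)}{\Psi_m(x)}-\frac{x^{-bm}\Psi_b(x^m)}{\Psi_n(x)}$ followed by the same expansion in powers of $(1-x)$ via Cauchy products, with the sign conversion $(-1)^{s_1}\langle bm\rangle_{s_1}=(-bm)_{s_1}$ handled identically. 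The second case is treated by the mirror identity in both arguments.
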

\begin{proof}
We prove the case $an-bm=1$ with $a>\beta.$ Now consider the generating function
\begin{align*}
\frac{x-1}{\Psi_m(x)\Psi_n(x)}&=\frac{x^{an-bm}-1}{\Psi_m(x)\Psi_n(x)}\\
&=\frac{x^{-bm}(x^{an}-x^{bm})}{\Psi_m(x)\Psi_n(x)}\\
&=\frac{x^{-bm}(x^{an}-1)}{\Psi_m(x)\Psi_n(x)}-\frac{x^{-bm}(x^{bm}-1)}{\Psi_m(x)\Psi_n(x)}\\
& = \frac{x^{-bm}(x-1)\Psi_a(x^n)}{\Psi_m(x)}-\frac{x^{-bm}(x-1)\Psi_b(x^m)}{\Psi_n(x)}\\
\end{align*}
Therefore
\begin{equation}\label{ps6}
  \frac{1}{\Psi_m(x)\Psi_n(x)}=\frac{x^{-bm}\Psi_a(x^n)}{\Psi_m(x)}-\frac{x^{-bm}\Psi_b(x^m)}{\Psi_n(x)}  
\end{equation}

We note that
\begin{align*}
\frac{1}{\Psi_m(x)\Psi_n(x)}&=\left(\sum_{k=0}^\infty(-1)^{k}{\widetilde{\beta}_{k}(m)\over m}\frac{(1-x)^k}{k!}\right)\left(\sum_{k=0}^\infty(-1)^{k}{\widetilde{\beta}_{k}(n)\over n}\frac{(1-x)^k}{k!}\right)\\
&=\sum_{k=0}^\infty(-1)^{k}\sum_{k_1+k_2=k} \binom{k}{k_1,k_2}{\widetilde{\beta}_{k_1}(m)\over m}{\widetilde{\beta}_{k_2}(n)\over n}\frac{(1-x)^k}{k!},
\end{align*}

\begin{multline*}
{x^{-bm}\Psi_a(x^n)\over \Psi_m(x)}=\left(\sum_{k=0}^\infty \sum_{k_1+k_2=k}\binom{k}{k_1, k_2}(-1)^k
\left(- bm\right)_{k_1}\widetilde{\sigma}_{k_2}(n,a-1){(1-x)^k\over k!} \right) \times\\
\left(\sum_{k=0}^\infty(-1)^{k}{\widetilde{\beta}_{k}(m)\over m}\frac{(1-x)^k}{k!}  \right) 
\end{multline*}
\[ \hspace*{2cm}= {1\over m}\sum_{k=0}^\infty\sum_{k_1+k_2=k}\sum_{s_1+s_2=k_1} k!(-1)^{k}\frac{\left(- bm\right)_{s_1}}{s_1!}\frac{\widetilde{\sigma}_{s_2}(n,a-1)}{s_2!}\frac{\widetilde{\beta}_{k_2}(m)}{k_2!}\frac{(1-x)^k}{k!}\]
Similarly, we have
\[{x^{-bm}\Psi_b(x^m)\over \Psi_n(x)}={1\over n}\sum_{k=0}^\infty \sum_{k_1+k_2=k}\sum_{s_1+s_2=k_1}k!(-1)^{k}\frac{\left(- bm\right)_{s_1}}{s_1!}\frac{\widetilde{\sigma}_{s_2}(m,b-1)}{s_2!}\frac{\widetilde{\beta}_{k_2}(n)}{k_2!}\frac{(1-x)^k}{k!}\]

Now collect the coefficients of $\displaystyle{(1-x)^k\over k!}$ in the equation (\ref{ps6}), we get the desire result.
\begin{multline*}
  \sum_{k_1+k_2=k} {\widetilde{\beta}_{k_1}(m)\over k_1!}{\widetilde{\beta}_{k_2}(n)\over k_2!} = n\sum_{k_1+k_2=k}\sum_{s_1+s_2=k_1} \frac{\left(- bm\right)_{s_1}}{s_1!}\frac{\widetilde{\sigma}_{s_2}(n,a-1)}{s_2!}\frac{\widetilde{\beta}_{k_2}(m)}{k_2!}\\
  -m\sum_{k_1+k_2=k}\sum_{s_1+s_2=k_1}\frac{\left(- bm\right)_{s_1}}{s_1!}\frac{\widetilde{\sigma}_{s_2}(m,b-1)}{s_2!}\frac{\widetilde{\beta}_{k_2}(n)}{k_2!}
\end{multline*}
The other case $\alpha m -\beta n =1$ with $a < \beta$ can be proved similarly. 
\end{proof}

\bibliographystyle{amsplain}

\end{document}